\documentclass{amsart}
\usepackage[T1]{fontenc}
\usepackage[utf8]{inputenc}
\usepackage{amsmath,amssymb,amsthm}
\usepackage{mathrsfs}
\usepackage{booktabs}
\usepackage{tikz}
\usepackage{nicematrix}
\usepackage[all,cmtip]{xy}
\usepackage{microtype}
\usepackage{csquotes}
\usepackage{indentfirst}
\usepackage{geometry}
\usepackage{xcolor}
\usepackage{url}
\usepackage{comment}
\usepackage[inline]{enumitem}
\setlist[enumerate]{
  itemsep=0pt,
  topsep=0pt,
  parsep=0pt,
  partopsep=0pt
}

\usepackage[pagebackref=false]{hyperref}
\hypersetup{
  hypertex=true,
  colorlinks=true,
  linkcolor=blue,
  anchorcolor=black,
  citecolor=red
}

\newtheorem{theorem}{Theorem}[section]
\newtheorem{corollary}[theorem]{Corollary}
\newtheorem{lemma}[theorem]{Lemma}
\newtheorem{proposition}[theorem]{Proposition}

\theoremstyle{definition}
\newtheorem{definition}[theorem]{Definition}
\newtheorem{example}[theorem]{Example}

\newtheorem{question}[theorem]{Question}

\theoremstyle{remark}
\newtheorem{remark}{Remark}

\theoremstyle{plain}   

\title[]{Sharp iterated-Logarithmic thresholds for quantitative measure and orbit equivalence between integer lattices}
\author{Chang-Hua JIAO}
\date{\today}
\begin{document}

\begin{abstract}
    We identify the sharp iterated-logarithmic thresholds at the critical exponent for quantitative measure equivalence and orbit equivalence between integer lattices.
    To be more precise, let $n>m$ be two positive integers, $\alpha$ be a positive number and $( \beta_j)_{j \geqslant 1}$ be a finitely supported sequence of non-negative numbers.
     We show that there is a quantitatively $t^{\alpha}\cdot \prod_{j \geqslant 1} ( \log^{(j)}{t} )^{-\beta_j} $-integrable measure equivalence from $\mathbb{Z}^n$ to $\mathbb{Z}^m$ if and only if either $\alpha<m/n$ or $\alpha=m/n$ and there is $j_* \geqslant 1$ such that $\beta_j= 1$ for all $1 \leqslant j < j_*$ and $\beta_{j_*} > 1$.
     Here $\log^{(j)}$ is the $j$-fold iterated logarithm.
     The same characterization holds for quantitative orbit equivalence.
     This characterization greatly strengthens the previous best-known result, due to the work of Delabie, Koivisto, Le Ma\^itre and Tessera (2022) and the work of Correia (2025), which asserts that there is a $t^{\alpha}$-integrable ($\alpha>0$) measure equivalence (or orbit equivalence) from $\mathbb{Z}^n$ to $\mathbb{Z}^m$ if and only if $\alpha<m/n$.
     In particular, our result solves, in much stronger forms, two open problems posed respectively by Delabie, Koivisto, Le Ma\^itre and Tessera, and by Naryshkin and Petrakos.
\end{abstract}
\subjclass{37A20, 20F65}

\maketitle

\tableofcontents

\section{Introduction}
\textbf{Background.}
\emph{Measure equivalence} and \emph{orbit equivalence} between countable groups (or their actions) are two fundamental notions in the fields of group theory and dynamical systems.
The classical theory of orbit equivalence goes back to the work of Dye \cite{On_groups_of_measure_preserving_transformations_1,On_groups_of_measure_preserving_transformations_2}, while measure equivalence was introduced by Gromov \cite{Asymptotic_invariants_of_infinite_groups} as a measure-theoretic analogue of quasi-isometry.
For non-amenable groups (and their actions), these relations often exhibit strong rigidity phenomena and have led to many celebrated results, see, e.g. \cite{Orbit_equivalence_rigidity,Gromov's_measure_equivalence_and_rigidity_of_higher_rank_lattices,Orbit_equivalence_rigidity_and_bounded_cohomology,Measure_equivalence_rigidity_of_the_mapping_class_group}.
In the amenable setting, however, these relations are extremely flexible (indeed trivial): the Ornstein-Weiss theorem in \cite{Ergodic_theory_of_amenable_group_actions._I._The_Rohlin_lemma} implies that any two countably infinite amenable groups are orbit equivalent, and hence measure equivalent. 
This naturally motivates the \emph{quantitative} refinements of these two notions by imposing some additional (e.g. integrability) conditions.

This perspective dates back to at least Belinskaya's Theorem \cite{Partitionings_of_a_Lebesgue_space_into_trajectories_which_may_be_defined_by_ergodic_automorphisms}, which says that two $L^1$-orbit equivalent ergodic probability measure preserving $\mathbb{Z}$-actions are indeed flip-conjugate.
In 2004, Shalom \cite{Harmonic_analysis_cohomology_and_the_large-scale_geometry_of_amenable_groups} studied $L^{\infty}$-measure equivalence for countable amenable groups and its relation to quasi-isometry.
The notion of $L^p$-measure equivalence was later introduced and studied by Bader, Furman, and Sauer \cite{Integrable_measure_equivalence_and_rigidity_of_hyperbolic_lattices}, 
and several related quantitative restrictions on orbit-equivalence cocycles and their dynamical consequences were investigated in, e.g.,\cite{Integrable_measure_equivalence_for_groups_of_polynomial_growth,Behaviour_of_entropy_under_bounded_and_integrable_orbit_equivalence,Entropy_Shannon_orbit_equivalence_and_sparse_connectivity,Entropy_virtual_Abelianness_and_Shannon_orbit_equivalence}.

A systematic framework allowing more general integrability functions was recently introduced by Delabie, Koivisto, Le Ma\^itre and Tessera \cite{Quantitative_measure_equivalence_between_amenable_groups}.
For amenable groups, they obtained rigidity results through the monotonicity of the isoperimetric profile and complementary flexibility results by constructing orbit equivalences from Følner tiling sequences. 
Their methods produced many essentially sharp examples, although in several cases a logarithmic gap remained between the obstructions and the constructions. 
The construction and the inverse problems arising from this framework were further investigated by Escalier \cite{Building_prescribed_quantitative_orbit_equivalence_with_the_ntegers}.

\textbf{Critical exponent for integer lattices.}
To test this quantitative theory, integer lattices perhaps form the most basic and natural family.
Let $n>m$ be two positive integers.
Delabie, Koivisto, Le Ma\^itre and Tessera in \cite{Quantitative_measure_equivalence_between_amenable_groups} proved that, for each $0<\alpha<m/n$, there exists a $t^{\alpha}$-integrable orbit (and hence measure) equivalence from $\mathbb{Z}^n$ to $\mathbb{Z}^m$, while for all $\alpha>m/n$ there always do not exist $t^{\alpha}$-integrable ones.
The critical case $\alpha=m/n$ was settled by Correia \cite{On_the_absence_of_quantitatively_critical_measure_equivalence_couplings} very recently: no $t^{m/n}$-integrable measure equivalence from $\mathbb{Z}^n$ to $\mathbb{Z}^m$ exists.
This completes the classification at the power scale, but the behaviour at much finer sacles remains open.
More precisely, Theorem 6.12 in \cite{Quantitative_measure_equivalence_between_amenable_groups} says that there is an orbit equivalence (and hence measure equivalence) from $\mathbb{Z}^n$ to $\mathbb{Z}^m$ which is $t^{m/n}/ (\log{t})^{1+\epsilon} $-integrable for all $\epsilon>0$, while $t/\log t$-integrability is unknown to them.

\begin{question} [Question 6.14 in \cite{Quantitative_measure_equivalence_between_amenable_groups}] \label{Question_by_DKLMT_2022}
    Let $n>m$ be two positive integers, is $\mathbb{Z}^n$ always $(t^{m/n}/\log{t},L^{\infty})$-measure equivalent to $\mathbb{Z}^m$? What about $(t^{m/n}/\log{t},L^{0})$-measure equivalence?
\end{question}

The results mentioned above can be direcly applied to quantitative orbit equivalence between $\mathbb{Z}^d$-actions:
 a free p.m.p $\mathbb{Z}^n$-action is at best sub-$(t^{m/n},t^{n/m})$-orbit equivalent to another free p.m.p $\mathbb{Z}^m$-action.
Naryshkin and Petrakos very recently confirmed that this is indeed optimal for free $\mathbb{Z}$-odometers in \cite{Quantitative_orbit_equivalence_for_Z_odometers}.
Formally, they show that any two free $\mathbb{Z}$-odometers are sub-$(t,t)$-orbit equivalent and then asked the following question.

\begin{question}[Problem 3.6 in \cite{Quantitative_orbit_equivalence_for_Z_odometers}] \label{Question_by_Naryshkin_and_Petrakos_2026}
    Is a free $\mathbb{Z}^n$-odometer necessarily sub-$(t^{m/n},t^{n/m})$-orbit equivalent to a free $\mathbb{Z}^m$-odometer?
\end{question}

\textbf{Main results.}
In this paper, we will give answers to both Question \ref{Question_by_DKLMT_2022} and Question \ref{Question_by_Naryshkin_and_Petrakos_2026}.
Although the answers are negative, our results will go much further.
Rather than merely constructing counterexamples, we will show that there are \textbf{NO POSITIVE EXAMPLES} for either question.
Indeed, the best known quantitative thresholds will be significantly improved, both for measure equivalence and for orbit equivalence between integer lattices.

We now state our main result explicitly.
The notation and definitions will be specified later in the next section.
For an integer $k \geqslant 1$ and a sufficiently large number $t>0$, we inductively define $$\log^{(1)}(t):= \log{t}, \quad \log^{(k+1)}(t):= \log { \left(\log^{(k)}{t}\right)}, \quad \text{and} \quad \Lambda_k(t)= \prod_{j=1}^k \log^{(j)}{t}.$$
By convention, we let $\Lambda_0(t):=1$.

Since we are concerned only with the values close to $+\infty$, for an integer $k \geqslant 1$ and a real number $\alpha>0$, we put 
$$\varpi^{(k)}_{\alpha}(t)=\begin{cases}
t^{\alpha}/\Lambda_k(t), & \quad t \geqslant T \\
T^{\alpha}/\Lambda_k(T), & 0 \leqslant
t < T=T
\end{cases}$$
where $T:=T_{\alpha,k}>0$ is chosen to make $\varpi_{\alpha,k} : \mathbb{R}_{\geqslant 0} \to \mathbb{R}$ continuous, positive and increasing (see Subsection 2.3 for details).
Note that $\varpi^{(0)}_{\alpha}(t)= t^{\alpha}$ for all sufficiently large  $t \geqslant 0$.
Moreover, for any given $\alpha>0$ and $k \geqslant 0$, we have $$ \varpi^{(k+1)}_{\alpha} = o\left(\varpi^{(k)}_{\alpha}\right), \quad t \to +\infty. $$

We will use $|\cdot|$ rather than $|\cdot|_{\mathbb{Z}^d}$ to denote the word length on $\mathbb{Z}^d$, which is, more precisely, the sum of the absolute values of all coordinates.
Moreover, $\mathbf{e}_j$ is the $j$-th standard basis vector of $\mathbb{Z}^d$.

Our first main result is as follows.

\begin{theorem} \label{Theorem_main_result_measure_equivalence_version}
  Let $n>m$ be two positive integers.
  Then for any integer $k \geqslant 0$ and any measure equivalence from $\mathbb{Z}^n$ to $\mathbb{Z}^m$ is not $\varpi^{(k)}_{m/n}$-integrable.
\end{theorem}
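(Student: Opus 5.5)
We argue by contradiction, following the strategy of Correia for the critical power case and of Delabie, Koivisto, Le Ma\^itre and Tessera. Suppose a measure equivalence coupling $(\Omega,X_{\mathbb{Z}^n},X_{\mathbb{Z}^m})$ from $\mathbb{Z}^n$ to $\mathbb{Z}^m$ is $\varpi^{(k)}_{m/n}$-integrable. Write $\varphi=\varpi^{(k)}_{m/n}$. This means that the cocycle $c\colon \mathbb{Z}^m\times X_{\mathbb{Z}^n}\to\mathbb{Z}^n$ satisfies
\[
\int_{X_{\mathbb{Z}^n}} \varphi\bigl(|c(\mathbf{e}_j,x)|\bigr)\,d\mu(x)<\infty \qquad\text{for each } 1\leqslant j\leqslant m .
\]
Recall the monotonicity of isoperimetric profiles under $\varphi$-integrable measure equivalence. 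Roughly, if $\varphi$ and $t/\varphi(t)$ are increasing, then
\[
\varphi\circ j_{\mathbb{Z}^m}\preccurlyeq j_{\mathbb{Z}^n}, \qquad\text{that is,}\qquad \varphi(t^{1/m})\lesssim t^{1/n}.
\]
This inequality does not contradict the hypothesis at the level $t^{m/n}/\Lambda_k$, since we only have $\varphi(t^{1/m})=t^{1/n}/\Lambda_k(t^{1/m})\leqslant t^{1/n}$. So the coarse monotonicity is not enough, and we must use the integrability more finely, in the spirit of Correia's absence result. There, the $O(\cdot)$ in the monotonicity bound is upgraded to an $o(\cdot)$ coming from a tail of a convergent integral, and a summation over dyadic scales then yields a divergence.

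The plan has three steps.

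\textbf{Step 1 (per-scale bound).} For a Følner box $F_R=[0,R]^m\subset\mathbb{Z}^m$, push it into $\mathbb{Z}^n$ via the cocycle. Produce, for a.e. $x$, a set of size $\asymp R^m$ in $\mathbb{Z}^n$ whose boundary is controlled by
\[
\sum_{g\in F_R}\sum_j |c(\mathbf{e}_j,g\cdot x)| .
\]
The isoperimetric inequality in $\mathbb{Z}^n$, namely $|\partial A|\gtrsim |A|^{(n-1)/n}$, then forces the boundary contribution at scale $R$ to be at least of order $R^{m(n-1)/n}$. After dividing by $|F_R|$, this says the following. Let
\[
a_R:=\mathbb{E}\bigl[\,|c|\wedge R^{m/n}\,\bigr]\cdot R^{-m/n}.
\]
Then, in a suitable truncated sense, $a_R$ is bounded below on a set of definite measure. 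Equivalently, we get
\[
\mu\bigl(|c|\geqslant \delta R^{m/n}\bigr)\gtrsim R^{-m/n}\quad\text{up to truncation terms.}
\]

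\textbf{Step 2 (dyadic decomposition).} Make Step 1 quantitative on dyadic scales. Setting $t=2^{\ell m/n}$, aim for the tail-sum inequality
\[
\sum_{\ell\leqslant L} 2^{\ell m/n}\,\mu\bigl(|c|\in[2^{\ell},2^{\ell+1})\bigr)\cdot 2^{-\ell}\gtrsim 1
\]
for every scale in a sequence of disjoint scale-windows. Done carefully, this should say that each dyadic block of scales $[N,N^2]$ or $[N,2^N]$ contributes a definite amount to
\[
\sum_\ell \mu(|c|\approx 2^{\ell})\,\frac{2^{\ell m/n}}{\Lambda_k(2^\ell)} .
\]

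\textbf{Step 3 (divergence).} Carry out the summation. This is the main obstacle. One must show that a lower bound of the type "mass $\gtrsim 1/\ell$ in some sense per scale" survives weighting by $1/\Lambda_k(2^\ell)$ and still diverges. I would try a layered-window argument. Choose scale windows $W_i$ so that $1/\Lambda_k$ is essentially constant on each window. Each window should contribute $\gtrsim 1/(\text{window weight})$, so that the total behaves like
\[
\sum_\ell \frac{1}{\ell\,\log\ell\cdots\log^{(k-1)}\ell},
\]
which diverges for every $k$. This matches the sharp threshold in the abstract, where $\beta_j=1$ at all levels gives divergence.

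Concretely, the key needed estimate is the following. If $\sum_\ell \mu(|c|\approx 2^\ell)\,2^{\ell m/n}w_\ell<\infty$ with $w_\ell=1/\Lambda_k(2^\ell)$, then at infinitely many scales the boundary at scale $R$ is $o(R^{m(n-1)/n})$, which contradicts the isoperimetric inequality. Making this rigorous requires the Step 1 bound to be stable under averaging over a whole range of scales. Specifically, we must control the contributions of large jumps, with $|c|\gg R^{m/n}$, through the truncation. I expect this uniform multi-scale control to be the hardest part, and I would first try to adapt Correia's argument by replacing his single logarithmic weight with the iterated product $\Lambda_k$.
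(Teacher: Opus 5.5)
\textbf{The cocycle direction is reversed, so Step~1 cannot work.} In the paper's convention, a coupling from $\mathbb{Z}^n$ to $\mathbb{Z}^m$ is $\varpi^{(k)}_{m/n}$-integrable when the cocycle $\kappa\colon\mathbb{Z}^n\times X_{\mathbb{Z}^m}\to\mathbb{Z}^m$ is integrable. That is, what is controlled are the jumps of the maps $\kappa_x\colon\mathbb{Z}^n\to\mathbb{Z}^m$, from the larger lattice to the smaller one. This is also the direction in which the bound $\varphi(t^{1/m})\lesssim t^{1/n}$ you quote holds.

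Under your reading, with $c\colon\mathbb{Z}^m\times X_{\mathbb{Z}^n}\to\mathbb{Z}^n$ integrable, the statement is false. The inverse of the orbit equivalence of Theorem~\ref{Theorem_quantitative_orbit_equivalence_for_p_adic_adding_machines} is $\varpi^{(k)}_{n/m,\epsilon}$-integrable, hence $\varpi^{(k)}_{m/n}$-integrable, because $m/n<n/m$.

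Correspondingly, Step~1 yields no information. Comparing the isoperimetric lower bound for the image $c_x(F_R)\subset\mathbb{Z}^n$ with the total jump size gives $R^{m(n-1)/n}\lesssim R^m\,\mathbb{E}|c|$, which holds trivially. The per-scale bound you extract is also mis-scaled. A tail $\mu(|c|\geqslant\delta t)\gtrsim t^{-1}$ (your bound with $t=R^{m/n}$) is summable against $t^{m/n}/\Lambda_k(t)$, so no divergence can come from it. Steps~2 and~3 are then left as acknowledged obstacles, with no mechanism given.

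\textbf{The missing idea is to pull boxes of $\mathbb{Z}^m$ back through $\kappa_x$, not push them forward.}
\begin{enumerate}
\item Restrict $\kappa_x$ to the grid $[[N]]^n$ and pull back a uniformly random translate of the partition of $\mathbb{Z}^m$ into cubes of side $K$.
\item A grid edge $\{\mathbf{u},\mathbf{v}\}$ is cut with probability at most $\min\{|\kappa_x(\mathbf{u})-\kappa_x(\mathbf{v})|/K,1\}$.
\item Each atom $U$ satisfies $|\kappa_x(U)|\leqslant K^m$. The edge-isoperimetric inequality in the grid then gives $|\partial U|\gtrsim|U|^{1-1/n}\geqslant K^{-m/n}|\kappa_x(U)|$.
\item Discard the at most one atom larger than half the grid. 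Then every such partition cuts $\gtrsim K^{-m/n}|\kappa_x([[N]]^n)|$ edges, provided $|\kappa_x([[N]]^n)|\geqslant 2K^m$.
\item You also need $|\kappa_x([[N]]^n)|\geqslant\rho N^n$ on a set of measure at least $\rho$. This requires its own argument, via finite-to-oneness of $\kappa_x$ on return times to $X_{\mathbb{Z}^m}\cap B_{\mathbb{Z}^n}(r_0)X_{\mathbb{Z}^n}$.
\item Combining this with invariance of the induced action gives $\sum_j\mathbb{E}\min\{|\kappa(\mathbf{e}_j,\cdot)|/K,1\}\gtrsim K^{-m/n}$ at every scale $K$. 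This is the correct normalization.
\end{enumerate}

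With this in hand, Step~3 is immediate rather than the main obstacle. Since $m/n<1$, one has $\varpi^{(k)}_{m/n}(t)\gtrsim\sum_j\frac{2^{jm/n}}{\Lambda_k(2^j)}\min\{t/2^j,1\}$. By linearity, $\sum_j\int\varpi^{(k)}_{m/n}(|\kappa(\mathbf{e}_j,x)|)\,d\mu(x)\gtrsim\sum_j 1/\Lambda_k(2^j)=\infty$. No window decomposition or $o(\cdot)$-upgrade is needed.
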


Theorem \ref{Theorem_main_result_measure_equivalence_version} greatly strengthens previous results.
By taking $k=0$, we obtain the result of Correia in \cite{On_the_absence_of_quantitatively_critical_measure_equivalence_couplings} for integer lattices, which asserts that any measure equivalence from $\mathbb{Z}^n$ to $\mathbb{Z}^m$ is not $t^{m/n}$-integrable when $n>m$.
Taking $k=1$ gives a complete answer to Question \ref{Question_by_DKLMT_2022}.
Namely, $\mathbb{Z}^n$ is neither $(t^{m/n}/\log{t},L^{\infty})$-measure equivalent nor $(t^{m/n}/\log{t},L^0)$-measure equivalent to $\mathbb{Z}^m$ when $n>m$.

Since (quantitative) orbit equivalence is stronger than (quantitative) measure equivalence (see Proposition \ref{Proposition_QOE_is_stronger_than_QME}), we immediately obtain the following corollary.

\begin{corollary} \label{Corollary_main_result_orbit_equivalence_version}
    Let $n>m$ be two positive integers and $\mathbb{Z}^n \curvearrowright (X, \mu)$ and $\mathbb{Z}^m \curvearrowright (Y, \nu)$ be two free p.m.p. actions.
Suppose that $\Phi: X \to Y$ is an orbit equivalence.
Then $\Phi$ is not $\varpi^{(k)}_{m/n}$-integrable.
Namely, $\mathbb{Z}^n$ cannot be $\left(\varpi^{(k)}_{m/n},L^0\right)$-orbit equivalent to $\mathbb{Z}^m$ when $n>m$.
\end{corollary}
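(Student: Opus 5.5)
The corollary should follow by reduction to Theorem \ref{Theorem_main_result_measure_equivalence_version}, using Proposition \ref{Proposition_QOE_is_stronger_than_QME}. I argue by contradiction: fix $k\geqslant 0$ and suppose $\Phi:X\to Y$ is a $\varpi^{(k)}_{m/n}$-integrable orbit equivalence between the free p.m.p.\ actions $\mathbb{Z}^n\curvearrowright(X,\mu)$ and $\mathbb{Z}^m\curvearrowright(Y,\nu)$. In the $(\varpi^{(k)}_{m/n},L^0)$ formulation, only the cocycle from $\mathbb{Z}^n$ to $\mathbb{Z}^m$ is controlled.

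Concretely, for each generator $\mathbf{e}_j$ of $\mathbb{Z}^n$ the cocycle $c(\mathbf{e}_j,x)\in\mathbb{Z}^m$ is defined by $\Phi(\mathbf{e}_j\cdot x)=c(\mathbf{e}_j,x)\cdot\Phi(x)$. The integrability assumption says
\[
\int_X \varpi^{(k)}_{m/n}\bigl(|c(\mathbf{e}_j,x)|\bigr)\,d\mu(x)<\infty \quad\text{for all } j.
\]

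The next step is the standard construction of a measure equivalence coupling from $\Phi$. Take $\Omega=X\times\mathbb{Z}^m$, or equivalently $Y\times\mathbb{Z}^n$ after identification through $\Phi$. Let $\mathbb{Z}^n$ act through the action on $X$ twisted by the cocycle, and let $\mathbb{Z}^m$ act by translation in the second coordinate. The fundamental domains are $X\times\{0\}$ and its image under $\Phi$. With these domains, the measure equivalence cocycle $\mathbb{Z}^n\times X_{\mathbb{Z}^m}\to\mathbb{Z}^m$ coincides, after the identification, with $c$.

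Proposition \ref{Proposition_QOE_is_stronger_than_QME} packages exactly this and preserves the integrability function $\varphi=\varpi^{(k)}_{m/n}$. It therefore yields a $\varpi^{(k)}_{m/n}$-integrable measure equivalence from $\mathbb{Z}^n$ to $\mathbb{Z}^m$. This contradicts Theorem \ref{Theorem_main_result_measure_equivalence_version}.

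The only point needing care is the bookkeeping of directions. The orbit equivalence "from $\mathbb{Z}^n$ to $\mathbb{Z}^m$" must produce a measure equivalence in the same direction, with the same integrability function $\varpi^{(k)}_{m/n}$, and with no integrability demanded on the reverse cocycle, since we are in the $L^0$ setting.

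A second point is that $\varpi^{(k)}_{m/n}$ is only defined on word lengths through the constant extension below $T$. This is harmless: the integrability condition is insensitive to modifications on bounded sets, because the measures are finite. I expect no genuine obstacle beyond citing the proposition correctly.
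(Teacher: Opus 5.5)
Your proposal is correct and follows the paper's route: the paper deduces the corollary immediately from Theorem~\ref{Theorem_main_result_measure_equivalence_version} via Proposition~\ref{Proposition_QOE_is_stronger_than_QME}, which turns a $\varpi^{(k)}_{m/n}$-integrable orbit equivalence into a $\varpi^{(k)}_{m/n}$-integrable coupling (with $X_{\mathbb{Z}^n}=X_{\mathbb{Z}^m}$) whose cocycle is the orbit cocycle. Two cosmetic remarks: you dropped the scaling constants $c_g$ from the definition of integrability, which is harmless by Proposition~\ref{Proposition_the_same_integrability_up_to_constant}, and in your skew-product model $X\times\mathbb{Z}^m$ both fundamental domains can be taken to be $X\times\{0\}$.
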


 Corollary \ref{Corollary_main_result_orbit_equivalence_version} yields a negative answer to Question \ref{Question_by_Naryshkin_and_Petrakos_2026}:
 a free p.m.p. $\mathbb{Z}^n$-action cannot be sub-$(t^{m/n}, t^{n/m})$-orbit equivalent to a free p.m.p. $\mathbb{Z}^m$-action for $n \neq m$.
 Note that this answer is of full generality, in the sense that it is valid for \textbf{ALL FREE P.M.P. $\mathbb{Z}^d$-ACTIONS} rather than just for free $\mathbb{Z}^d$-odometers, the special kind asked for in Question \ref{Question_by_Naryshkin_and_Petrakos_2026}.

Theorem \ref{Theorem_main_result_measure_equivalence_version} is logarithmically sharp in the following sense.
For an integer $k \geqslant 1$, and two real numbers $\alpha>0$ and $\epsilon>0$, we put
$$\varpi^{(k)}_{\alpha, \epsilon}(t)=\begin{cases}
\left(t^\alpha/\Lambda_{k}(t)\right) \cdot \left(\log^{(k)}{t}\right)^{-\epsilon}, & \quad t \geqslant T \\
\left(T^\alpha/\Lambda_{k}(T)\right) \cdot \left(\log^{(k)}{T}\right)^{-\epsilon}, & 0 \leqslant t < T
\end{cases}$$
where $T:=T_{\alpha,k, \epsilon}>0$ is chosen to make $\varpi^{(k)}_{\alpha, \epsilon} : \mathbb{R}_{\geqslant 0} \to \mathbb{R}$ continuous, positive and increasing (see Subsection 2.3 for details).

\begin{proposition} \label{Proposition_quantitative_orbit_equivalence_for_integer_lattices}
    Let $n,m$ be two positive integers.
    Then $\mathbb{Z}^n$ is always $\left(\varpi^{(k)}_{m/n, \epsilon}, \varpi^{(k)}_{n/m, \epsilon}\right)$-orbit equivalent (and hence $\left(\varpi^{(k)}_{m/n, \epsilon}, \varpi^{(k)}_{n/m, \epsilon}\right)$-measure equivalent) to $\mathbb{Z}^m$ for every real number $\epsilon>0$ and every integer $k \geqslant 1$.
\end{proposition}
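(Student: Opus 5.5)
The plan is to use the Følner tiling machinery of Delabie, Koivisto, Le Ma\^itre and Tessera \cite{Quantitative_measure_equivalence_between_amenable_groups}, which turns compatible Følner tiling sequences of two groups into an orbit equivalence. Roughly, their criterion (the one behind their Theorem 6.12) says the following. Suppose $\mathbb{Z}^n$ and $\mathbb{Z}^m$ admit tiling sequences $(T_i)$ and $(T'_i)$ with $|T_i|=|T'_i|$ and each $T_{i+1}$ tiled by translates of $T_i$. Then there is an orbit equivalence whose cocycles satisfy
$$\mu\bigl(|c(\mathbf{e}_j,\cdot)|>R_i\bigr)\lesssim \frac{|\partial T_{i}|}{|T_{i}|}+\dots,$$
where $R_i\approx\operatorname{diam}(T'_{i+1})$ on the $\mathbb{Z}^m$ side, and symmetrically for the inverse cocycle. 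Integrability then reduces to the convergence of
$$\sum_i \varphi(\operatorname{diam} T'_{i+1})\,\varepsilon_i,$$
where $\varepsilon_i$ is the boundary ratio of the $i$-th tile.

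First step: take the tiles to be boxes. Put $T_i=[0,L_i)^n$ and $T'_i=[0,L'_i)^m$ with $L_i^n=(L'_i)^m=N_i$, chosen so that $L_{i+1}/L_i$ and $L'_{i+1}/L'_i$ are integers. This is arranged by letting $N_i=\prod_{l\le i}q_l^{nm}$ with integers $q_l\ge 2$. Then
$$L_i=\prod_{l\le i} q_l^{m},\qquad L'_i=\prod_{l\le i} q_l^{n}.$$
The boundary ratios are $\varepsilon_i\asymp 1/L_i$ on the $\mathbb{Z}^n$ side and $\varepsilon'_i\asymp 1/L'_i$ on the $\mathbb{Z}^m$ side. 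The cocycle from the $\mathbb{Z}^n$ side is controlled by $\operatorname{diam}(T'_{i+1})\asymp L'_{i+1}=L_{i+1}^{n/m}$. So the $\varpi^{(k)}_{m/n,\epsilon}$-integrability condition becomes
$$\sum_i \frac{L_{i+1}}{L_i}\,\frac{1}{\Lambda_k(L_{i+1})\,(\log^{(k)}L_{i+1})^{\epsilon}}<\infty,$$
up to constants. The condition for the inverse direction is the same sum with the roles of $n$ and $m$ swapped.

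Second step: choose the growth. With bounded ratios $q_l=2$ the sum becomes $\sum_i 1/(\Lambda_k(2^{mi})(\log^{(k)}2^{mi})^\epsilon)$. This is comparable to $\sum_i 1/\bigl(i\,\Lambda_{k-1}(i)(\log^{(k-1)}i)^{\epsilon}\bigr)$, which converges by the Cauchy condensation / integral test for iterated logarithms (Bertrand-type series). The factor $L_{i+1}/L_i$ stays bounded, so it only affects constants. The inverse direction is identical with $L'$ in place of $L$. This should give a precise version of DKLMT's argument, in which a constant ratio $q$ already produces $t^{m/n}/\log^{1+\epsilon}t$ for $k=1$. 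I would therefore check that their Theorem 6.12 proof actually yields $\varphi$-integrability for every increasing $\varphi$ whose series converges, rather than only the power-log family.

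Main obstacle: making sure the tiling-to-cocycle estimate is sharp, with no extra loss. If the bound involves $\varphi(\operatorname{diam}T'_{i+1})$ multiplied by the proportion of points near the boundary of some tile at \emph{every} level $\le i$, the errors accumulate, and the accumulated sum $\sum_{l\le i}1/L_l$ must be bounded. With geometric growth it is, but this needs checking. I would also need to verify the monotonicity and subadditivity-type properties of $\varpi^{(k)}_{\alpha,\epsilon}$ (continuity, increase, and $\varphi(Ct)\lesssim\varphi(t)$) from Subsection 2.3. These are what allow replacing diameters by $L'_{i+1}$ up to constants. The case $n=m$ is trivial (the identity orbit equivalence is bounded), and the case $n<m$ is symmetric.
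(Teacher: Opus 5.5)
Your argument is correct, but it takes a different route from the paper. You use the general Følner-tiling criterion of Delabie--Koivisto--Le Ma\^itre--Tessera as a black box. That criterion says compatible tiling sequences with $|T_i|=|T'_i|$ yield an orbit equivalence coupling that is $\varphi$-integrable whenever $\sum_i \varphi(\mathrm{diam}\,T'_{i+1})\,|\partial T_i|/|T_i|<\infty$, for an arbitrary increasing $\varphi$. You then plug in $\varpi^{(k)}_{m/n,\epsilon}$ and $\varpi^{(k)}_{n/m,\epsilon}$. Since your box tiling with $q_l=2$ does not depend on $k$ or $\epsilon$, a single coupling handles all of them.

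The paper proves the stronger Theorem~\ref{Theorem_quantitative_orbit_equivalence_for_p_adic_adding_machines} directly. It takes the digit-interleaving map $B_m^{-1}\circ B_n$ between $p$-adic adding machines and checks that it is an orbit equivalence via cofinality. It then bounds $|\kappa^{\Phi}(\mathbf{e}_j,\mathbf{x})|\leqslant m\,p^{\hat\ell_j(\mathbf{x})+1}$ by the carry length $\ell_j$, which is geometrically distributed. This leads to the Bertrand series $\sum_l \bigl(\Lambda_k(Qp^{nl/m})\,(\log^{(k)}(Qp^{nl/m}))^{\epsilon}\bigr)^{-1}$, which is the same series you obtain.

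Underneath, the two arguments use the same mechanism. The adding-machine coupling is exactly the tiling coupling for boxes of volume $p^i$, with one coordinate enlarged by a factor $p$ at each level. The carry length $\ell_j$ determines the first level at which $\mathbf{x}+\mathbf{e}_j$ stays in the same tile as $\mathbf{x}$.

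Your worry about accumulated errors does not arise. If $\mathbf{e}_j+g_i\in T_i$ for the level-$i$ position $g_i$, then $\mathbf{e}_j+g_{i'}\in T_{i'}$ for all $i'\geqslant i$. So the bad events are nested, and each tail is bounded by a single term $|\partial T_i|/|T_i|$. The doubling property $\varpi^{(k)}_{\alpha,\epsilon}(Ct)\lesssim\varpi^{(k)}_{\alpha,\epsilon}(t)$, needed to absorb constants in the diameters, follows exactly as in Proposition~\ref{Proposition_the_same_integrability_up_to_constant}.

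The paper's route is self-contained and produces a concrete pair of odometer actions, which is what its odometer questions concern. Yours is shorter given the DKLMT theorem and applies verbatim to any compatible Følner tiling sequences and any increasing $\varphi$ for which the series converges.
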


Proposition \ref{Proposition_quantitative_orbit_equivalence_for_integer_lattices} follows immediately from the following stronger result, which is our second main result.
Recall that for a positive integer $d$ and a prime $p$, the $d$-dimensional $p$-adic adding machine is the $\mathbb{Z}^d$-action on $\mathbb{Z}_p^d$ by addition (see Subsection 5.1 for more details).

\begin{theorem} \label{Theorem_quantitative_orbit_equivalence_for_p_adic_adding_machines}
    Let $n,m$ be two positive integers and $p$ be a prime.
    Then there is an orbit equivalence from the $n$-dimensional $p$-adic adding machine to the $m$-dimensional one, which is $\left(\varpi^{(k)}_{m/n, \epsilon}, \varpi^{(k)}_{n/m, \epsilon}\right)$-integrable for every real number $\epsilon>0$ and every integer $k \geqslant 1$.
\end{theorem}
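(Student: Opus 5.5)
We construct the orbit equivalence via Følner tiling sequences adapted to the adding machines, following the strategy of Delabie--Koivisto--Le Ma\^itre--Tessera (Theorem 6.12 in \cite{Quantitative_measure_equivalence_between_amenable_groups}), but with growth rates of the tiles chosen far more carefully.

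\textbf{Tilings by boxes.} Both actions are odometers, so a compatible sequence of $p$-adic box tilings is available. For the $n$-dimensional machine we take boxes $B^{(n)}_\ell=\prod_{i=1}^n[0,p^{a_{\ell,i}})$, and for the $m$-dimensional one $B^{(m)}_\ell=\prod_{i=1}^m[0,p^{b_{\ell,i}})$. The exponents $a_{\ell,i}$ and $b_{\ell,i}$ are nondecreasing in $\ell$, and we impose $\sum_i a_{\ell,i}=\sum_i b_{\ell,i}=:N_\ell$. Then $|B^{(n)}_\ell|=|B^{(m)}_\ell|=p^{N_\ell}$, and the digit levels of $\mathbb{Z}_p^n$ and $\mathbb{Z}_p^m$ coincide at each stage $\ell$.

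\textbf{The orbit equivalence.} We build $\Phi:\mathbb{Z}_p^n\to\mathbb{Z}_p^m$ inductively. At each stage, a point's position inside the level-$\ell$ tile is sent, through a bijection $B^{(n)}_\ell\to B^{(m)}_\ell$, to a position in the corresponding $m$-tile. These bijections must be compatible with the refinement from level $\ell$ to level $\ell+1$, which holds provided each step adds exactly one $p$-adic digit. Concretely, each step increments one $a_i$ and one $b_j$, cycling through the coordinates in round-robin order. The bijection at level $\ell+1$ permutes the $p$ copies of the level-$\ell$ tile. The limit map is a measure-preserving bijection that maps orbits to orbits, because finite orbit pieces are eventually contained in a single tile.

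\textbf{Integrability estimates.} For a generator $\mathbf e_j$ of $\mathbb{Z}^n$, the cocycle $c(\mathbf e_j,x)$ has norm at most $\operatorname{diam}B^{(m)}_{\ell+1}\asymp p^{N_\ell/m}$ on the set of $x$ for which $x+\mathbf e_j$ first leaves the level-$\ell$ tile. That set has measure $\asymp p^{-N_\ell/n}$, the boundary proportion of an $n$-box. With the standard tiling, $\mu(|c|\approx p^{N/m})\approx p^{-N/n}$. Setting $t=p^{N/m}$, this reads $\mu(|c|\approx t)\approx t^{-m/n}$, so
\[
\int\varphi(|c|)\asymp\sum_N \varphi(p^{N/m})\,p^{-N/n}.
\]
For $\varphi=\varpi^{(k)}_{m/n,\epsilon}$ the summand is $1/\bigl(\Lambda_k(\cdot)(\log^{(k)})^{\epsilon}\bigr)$ evaluated at $p^{N/m}$, which behaves like $1/\bigl(N\log N\cdots(\log^{(k-1)}N)^{1+\epsilon}\bigr)$ and is summable for every $k$ and $\epsilon$. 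The only worry is whether the constants stay uniform. In the elementary construction the defect is a bounded factor, because consecutive boxes differ only by a factor $p$; that is the gain over \cite{Quantitative_measure_equivalence_between_amenable_groups}, where tiles grow superexponentially and a $\log^{1+\epsilon}$ loss appears. Since $\sum_N N^{-1}(\log N)^{-1-\epsilon}<\infty$ uniformly in this way, one construction works simultaneously for all $k$ and $\epsilon$. The inverse cocycle is handled symmetrically, with the roles of $n$ and $m$ exchanged, which gives $\varpi^{(k)}_{n/m,\epsilon}$-integrability.

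\textbf{Main obstacle.} The hard step is making the per-step bijections $B^{(n)}_{\ell+1}\to B^{(m)}_{\ell+1}$ both coherent with the previous level and displacement-controlled. A naive choice lets a step of size $1$ at a low digit level cascade into displacements at every level, and then the measure bounds pick up $\ell$-dependent multiplicities that could destroy summability. The first attempt is a digit-wise map: write $x\in\mathbb{Z}_p^n$ as a sequence of digits ordered by the round-robin schedule, and send the $N$-th digit of $x$ to the $N$-th digit of $\Phi(x)$ in the $m$-schedule. Then $x\mapsto x+\mathbf e_j$ changes only the digits up to the carry length $L$, and $\mu(L\ge N)\asymp p^{-N/n}$ exactly. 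The image displacement is bounded by the $m$-diameter at level $L$, which is $\lesssim p^{L/m}$. This gives precisely the distribution estimate above with uniform constants, and the computation reduces to the convergent iterated-logarithm series.
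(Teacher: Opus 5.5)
Your proposal is correct and is the paper's proof once the tiling language is removed. The digit-wise round-robin map you settle on is exactly $\Phi=B_m^{-1}\circ B_n$, and your estimate is the paper's: a carry of length $\ell$ in coordinate $j$ has probability $\asymp p^{-\ell}$ and forces a displacement $\lesssim p^{n\ell/m}$, which reduces to the same convergent series $\sum_\ell \bigl(\ell\,\Lambda_{k-1}(\ell)(\log^{(k-1)}\ell)^{\epsilon}\bigr)^{-1}$, with the inverse handled symmetrically. One detail you leave implicit: orbits map onto orbits only after restricting to the conull set $\mathcal{Z}_p^n\cap\Phi^{-1}(\mathcal{Z}_p^m)$, on which lying in the same orbit is exactly cofinality of digit sequences.
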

Our result clearly strengthens Theorem 6.12 in \cite{Quantitative_measure_equivalence_between_amenable_groups}.
Moreover, these results should be compared with results in \cite{Building_prescribed_quantitative_orbit_equivalence_with_the_ntegers}.
In fact, our results improve its Theorem 1.7 and its Corollary 1.8 in the case $\rho= \exp$, since our orbit equivalence is valid uniformly for all $\epsilon >0$.

Combining our two main results above, we obtain a complete picture for logarithmic integrability for a measure (or orbit) equivalence from a higher dimensional integer lattice to a lower dimensional integer lattice.
To be more precise, for $\alpha>0$ and a finitely supported (i.e., only finitely many terms are non-zero) sequence $\hat{\beta}=(\beta_j)_{j \geqslant 1}$ with each $\beta_j \geqslant 0$, we put 
$$\varPi_{\alpha, \hat{\beta}}(t)= t^\alpha \cdot \prod_{j=1}^{\infty} \left( \log^{(j)}t \right)^{-\beta_j}$$
for all sufficiently large $t>0$.

\begin{corollary}
    Let $n>m$ be two positive integers.
    Suppose that $\alpha>0$ and $\hat{\beta}=(\beta_j)_{j \geqslant 1}$ is a finitely supported sequence with each $\beta_j \geqslant 0$.
    Then there is a $\varPi_{\alpha, \hat{\beta}}$-integrable measure equivalence from $\mathbb{Z}^n$ to $\mathbb{Z}^m$ if and only if one of the following two conditions holds:
    \begin{enumerate}
        \item $\alpha<m/n$;
        \item $\alpha=m/n$ and there is $j_* \geqslant 1$ such that $\beta_j= 1$ for all $1 \leqslant j < j_*$ and $\beta_{j_*} > 1$.
    \end{enumerate}
    The same assertion holds when one replaces measure equivalence by orbit equivalence.
\end{corollary}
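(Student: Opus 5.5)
The corollary follows by comparing the growth of $\varPi_{\alpha,\hat\beta}$ with the two families $\varpi^{(k)}_{m/n}$ and $\varpi^{(k)}_{m/n,\epsilon}$. Two facts from the quantitative framework of Section 2 will be used. First, integrability is monotone: if $\varphi\preccurlyeq\psi$, i.e. $\varphi(t)\leqslant C\psi(t)+C$ for large $t$, then any $\psi$-integrable measure (or orbit) equivalence is also $\varphi$-integrable. Second, only the behaviour of the function near $+\infty$ matters. The proof then splits according to the exponents, with sufficiency and necessity handled separately. Since orbit equivalence implies measure equivalence (Proposition~\ref{Proposition_QOE_is_stronger_than_QME}), it suffices to construct orbit equivalences for sufficiency and to rule out measure equivalences for necessity.

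\emph{Sufficiency.} Suppose first that $\alpha<m/n$. Then $\varPi_{\alpha,\hat\beta}(t)\leqslant t^{\alpha}=o(\varpi^{(1)}_{m/n,1}(t))$, since $t^{m/n-\alpha}$ dominates any power of $\log t$. Proposition~\ref{Proposition_quantitative_orbit_equivalence_for_integer_lattices} then gives the required orbit equivalence by monotonicity. Now suppose $\alpha=m/n$ and $j_*$ is as in condition (2). Put $\epsilon=\beta_{j_*}-1>0$ and $k=j_*$. Then
\[
\frac{\varPi_{\alpha,\hat\beta}(t)}{\varpi^{(k)}_{m/n,\epsilon}(t)}=\prod_{j>j_*}\bigl(\log^{(j)}t\bigr)^{-\beta_j}\leqslant 1
\]
for large $t$, because the $\beta_j$ with $j\leqslant j_*$ cancel exactly and the remaining factors have nonnegative exponents. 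Proposition~\ref{Proposition_quantitative_orbit_equivalence_for_integer_lattices} again applies.

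\emph{Necessity.} If $\alpha>m/n$, then for large $t$ we have $\varPi_{\alpha,\hat\beta}(t)\geqslant t^{m/n}=\varpi^{(0)}_{m/n}(t)$, since $t^{\alpha-m/n}$ beats all the log factors. Theorem~\ref{Theorem_main_result_measure_equivalence_version} with $k=0$ then excludes a $\varPi_{\alpha,\hat\beta}$-integrable coupling. If $\alpha=m/n$ and condition (2) fails, let $K$ be larger than the support of $\hat\beta$, so $\beta_j=0$ for $j\geqslant K$. Scanning $j=1,2,\dots$, either some first index $j_0$ has $\beta_{j_0}\neq 1$, in which case failure of (2) forces $\beta_{j_0}<1$; or all $\beta_j=1$ for $j<K$, which is impossible since $\beta_K=0$. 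So such a $j_0\leqslant K$ with $\beta_{j_0}<1$ always exists. Taking $k=j_0$,
\[
\frac{\varPi_{\alpha,\hat\beta}(t)}{\varpi^{(k)}_{m/n}(t)}=\bigl(\log^{(j_0)}t\bigr)^{1-\beta_{j_0}}\prod_{j>j_0}\bigl(\log^{(j)}t\bigr)^{-\beta_j}\longrightarrow+\infty,
\]
because $\log^{(j_0)}t$ to a positive power dominates every finite product of powers of deeper iterated logarithms. Hence $\varpi^{(j_0)}_{m/n}\preccurlyeq\varPi_{\alpha,\hat\beta}$, and Theorem~\ref{Theorem_main_result_measure_equivalence_version} gives the contradiction.

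The only step requiring any care is this last case analysis, which identifies the first index $j_0$ with $\beta_{j_0}<1$ and uses the domination of deeper iterated logarithms by positive powers of shallower ones. Everything else reduces to the monotonicity of integrability and routine growth comparisons.
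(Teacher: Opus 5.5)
Your proposal is correct and is essentially the argument the paper intends when it says the corollary follows by combining its two main results. Necessity comes from Theorem~\ref{Theorem_main_result_measure_equivalence_version} (with $k=0$ when $\alpha>m/n$, and $k=j_0$ when $\alpha=m/n$), and sufficiency from Proposition~\ref{Proposition_quantitative_orbit_equivalence_for_integer_lattices}, with monotonicity of integrability under $\varphi\leqslant C\psi+C$ on the finite-measure space $X_H$ linking the two. One small wording slip: the case ``all $\beta_j=1$ for $j<K$'' is not impossible---it just gives $j_0=K$ with $\beta_K=0<1$---but your stated conclusion that such a $j_0\leqslant K$ always exists already covers it.
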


\textbf{Further questions.}
Firstly, although in general Question \ref{Question_by_Naryshkin_and_Petrakos_2026} has a negative answer, but one can again narrow the scope to consider sub-$(t,t)$-orbit equivalence for two free $\mathbb{Z}^d$-odometers (of the same dimension $d:=m=n$).
Our method in the present paper relies heavily on the difference in dimension and hence cannot be applied directly to this case.
So the narrowed case of Question \ref{Question_by_Naryshkin_and_Petrakos_2026} remains open.
Secondly, note that the $d$-dimensional $p$-adic adding machine is automatically a free $\mathbb{Z}^d$-odometer.
So it leads to the following refinement of Question \ref{Question_by_Naryshkin_and_Petrakos_2026}.

\begin{question}
    Let $n, m$ be two positive integers and $\epsilon>0$.
    Is a free $\mathbb{Z}^n$-odometer necessarily $\left(\varpi^{(k)}_{m/n, \epsilon},\varpi^{(k)}_{n/m, \epsilon}\right)$-orbit equivalent to a free $\mathbb{Z}^m$-odometer for all $k \geqslant 1$?
\end{question}

Note that our results indicate that they cannot be sub-$\left(\varpi^{(k)}_{m/n, \epsilon},\varpi^{(k)}_{n/m, \epsilon}\right)$-orbit equivalent.

\textbf{Organization.} The rest of the paper is structured as follows.
In Section 2, we collect preliminaries on (quantitative) measure equivalence and orbit equivalences and include some elementary facts about the function $\varpi^{(k)}_{\alpha}$.
In Section 3, we establish some combinatorial lemmas, which helps to prove Theorem \ref{Theorem_main_result_measure_equivalence_version}.
Section 4 is devoted to the proof of Theorem \ref{Theorem_main_result_measure_equivalence_version}.
In the last section, we study the quantitative orbit equivalence between $p$-adic adding machines and prove Theorem \ref{Theorem_quantitative_orbit_equivalence_for_p_adic_adding_machines}.

\section{Preliminaries}

\subsection{Measure equivalence and orbit equivalence}

A \emph{standard Borel space} $(X, \mathcal{B})$ consists of a topological space $X$ homeomorphic to a complete and separable metric space  and the Borel $\sigma$-algebra $\mathcal{B}:= \mathcal{B}(X)$.
A \emph{standard measure space} $(X, \mathcal{B}, \mu)$ consists of a standard Borel space $(X, \mathcal{B})$ and a non-zero $\sigma$-finite measure $\mu$ on it.
We sometimes simply write $(X, \mu)$ for a standard measure space.

Let $G$ be a countable discrete group and $(X, \mathcal{B}, \mu)$ be a standard measure space.
A (measurable) group action $G \curvearrowright (X, \mathcal{B}, \mu)$ is said to be \emph{measure-preserving} if $$\mu(g^{-1}B)= \mu(B), \quad \forall B \in \mathcal{B}, g \in G .$$ 
A \emph{probability measure-preserving (p.m.p.)} action is a measure-preserving action on a standard probability measure space.
An action $G \curvearrowright (X, \mu)$ is said to be \emph{(essentially) free} if for $\mu$-a.e. $x \in X$, we have $$g x \neq x, \quad \forall g \in G \setminus \{e_G\}.$$

\begin{definition}
    Let $G \curvearrowright (X, \mu)$ be a group action on a standard measure space.
    A \emph{fundamental domain} for $G \curvearrowright (X, \mu)$ is a Borel subset $X_G \subset X$ such that for $\mu$-a.e. $x \in X$ we have  $$|X_G \cap  Gx| = 1, $$
    where $Gx:=\{gx: g \in G\}$ is the $G$-orbit of $x$.
    The action $G \curvearrowright (X, \mu)$ is said to be \emph{smooth} if it admits a fundamental domain.
\end{definition}

\begin{definition}
    Let $G,H$ be two countable groups.
    A \emph{measure equivalence coupling} from $G$ to $H$ is a quadruple $\mathcal{C}=(X, X_G, X_H, \mu)$ where $(X,\mu)$ is a standard measure space equipped with two free, measure-preserving, and smooth actions by $G$ and $H$ such that
\begin{enumerate}
    \item the two actions are commuting in the sense that for $\mu$-a.e. $x \in X$ we have  $$hgx=ghx, \quad \forall g \in G,h \in H; $$
    \item $X_G$ is a fixed fundamental domain for $G \curvearrowright (X, \mu)$ and $X_H$ is a fixed fundamental domain for $H \curvearrowright (X, \mu)$;
    \item $\mu(X_G) < \infty$ and $\mu(X_H) < \infty$.
\end{enumerate}
If such a coupling exists, $G$ and $H$ are said to be \emph{measure equivalent}.
\end{definition}

It is obvious that $\mathcal{C}=(X, X_G, X_H, \mu)$ is a measure equivalence coupling from $G$ to $H$ if and only if $\mathcal{C}^{-1}:=(X, X_H, X_G, \mu)$ is a measure equivalence coupling from $H$ to $G$, with the same actions by $G$ and $H$.
Now given a measure equivalence coupling $\mathcal{C}=(X, X_G, X_H, \mu)$, the \emph{induced action} (with respect to $\mathcal{C}$) is defined to be $G \curvearrowright (X_H, \mu|_{X_H})$ (where the action is denoted by $g \star x$ ) such that for $\mu|_{X_H}$-a.e. $x \in X$ we have  $$\{ g \star x \}= X_H \cap H(gx), \quad \forall g \in G.$$
Note that this induced action is measure preserving.

\begin{definition}
    Given a measure equivalence coupling $\mathcal{C}=(X, X_G, X_H, \mu)$ from $G$ to $H$, the \emph{cocycle} with respect to $\mathcal{C}$ is defined to be the map $\kappa^{\mathcal{C}}: G \times X_H \to H$ such that for $\mu|_{X_H}$-a.e. $x \in X_H$, we have $$\kappa^{\mathcal{C}}(g,x) (gx)= g \star x, \quad \forall g \in G.$$
\end{definition}

The definition makes sense since the freeness of the action $H \curvearrowright (X, \mu)$ is assumed.
Note that the cocycle $\kappa^{\mathcal{C}}$ is clearly measurable (with Borel measurable structures on the two countable discrete groups) and we have the following \emph{cocycle identity}:
\begin{equation} \label{Equation_cocycle_identity_for_measure_equivalence_cocyle}
    \kappa^{\mathcal{C}}(g_2g_1,x)= \kappa^{\mathcal{C}}(g_2,g_1\star x) \kappa^{\mathcal{C}}(g_1,x), \quad  \text{ for } \mathrm{a.e.}\  x \in X_H \text{ and all } g_1,g_2 \in G.
\end{equation}

We now briefly introduce orbit equivalence.

\begin{definition}
   Let $G \curvearrowright (X, \mu)$ and $H \curvearrowright (Y, \nu)$ be two p.m.p. actions.
    An \emph{orbit equivalence} from $G \curvearrowright (X, \mu)$ to $H \curvearrowright (Y, \nu)$ is a measure isomorphism $\Phi: X \to Y$ that preserves the orbit structure; that is, for $\mu$-a.e. $x \in X$, we have  $$\Phi(Gx)=H\Phi(x).$$
    These two actions are said to be \emph{orbit equivalent} if such an orbit equivalence exists.
    Two groups are said to be \emph{orbit equivalent} if they admit orbit equivalent actions.
\end{definition}

It is easy to see that $\Phi^{-1}: Y \to X$ is an orbit equivalence from $H \curvearrowright (Y, \nu)$ to $G \curvearrowright (X, \mu)$ if and only if $\Phi$ is an orbit equivalence from $G \curvearrowright (X, \mu)$ to $H \curvearrowright (Y, \nu)$.

\begin{definition}
 If two free p.m.p. actions $G \curvearrowright (X, \mu)$ and $H \curvearrowright (Y, \nu)$ are orbit equivalent via $\Phi: X \to Y$, we can define $\kappa^{\Phi}:G \times X \to H$ such that for a.e. $x \in X$, we have 
$$\Phi(gx)=\kappa^{\Phi}(g,x)\Phi(x), \quad \forall g \in G.$$ 
The map $\kappa^{\Phi}$ is called the \emph{(orbit) cocycle} with respect to $\Phi$.
\end{definition}

The cocycle $\kappa^{\Phi}$ is well-defined (in measure-theoretic sense) since the $H$-action is free.
Clearly, $\kappa^{\Phi}$ is  measurable (with Borel measurable structures on the two countable discrete groups) and we also have the cocycle identity:
\begin{equation} \label{Equation_cocycle_identity_for_orbit_equivalence_cocycle}
    \kappa^{\Phi}(g_2g_1,x)= \kappa^{\Phi}(g_2,g_1x) \kappa^{\Phi}(g_1,x), \quad  \text{ for } \mathrm{a.e.}\  x \in X \text{ and all } g_1,g_2 \in G.
\end{equation}

We note that $G$ and $H$ are measure equivalent if there are two free p.m.p. actions $G \curvearrowright (X, \mu)$ and $H \curvearrowright (Y, \nu)$ which are orbit equivalent.

\subsection{Quantitative versions of the equivalences}
Suppose $G$ is a finitely generated group; say $G=\langle S \rangle$ for a finite subset $S \subset G$.
The \emph{word length} of $g \in G$ (w.r.t. the finite generating set $S$) is defined to be $$|g|_S:= \min\{n \geqslant 0: g=s_1 s_2 \cdots s_n \text{ with each } s_i \in S \cup S^{-1}\}.$$
By convention, $|e_G|_S=0$.
This naturally induces a \emph{word-length metric} (w.r.t. $S$) on $G$ defined by $$\mathrm{d}_S(g_1,g_2):= |g_1g_2^{-1}|_S, \quad \forall g_1,g_2 \in G.$$
For two different finite generating sets $S, S' \subset G$, the corresponding word-length metrics are bi-Lipschitz equivalent; that is, there is $C>0$ such that $$C^{-1} \cdot \mathrm{d}_S(g_1,g_2) \leqslant \mathrm{d}_{S'}(g_1,g_2) \leqslant C \cdot \mathrm{d}_{S}(g_1,g_2), \quad \forall g_1,g_2 \in G.$$
So in the sequel we will denote by $| \cdot |_G$ and $\mathrm{d}_G$ the word length and the word-length metric on $G$ respectively, since our considerations remain unchanged under bi-Lipschitz transformation.
We will mainly use the following word length for integer lattices.

\begin{example}
    For a positive integer $d$, fix the generating set $S=\{ \mathbf{e}_1, \cdots, \mathbf{e}_d\}$ of $\mathbb{Z}^d$ .
    Then we have $|\mathbf{v}|_S=|\mathbf{v}|=\sum_{j=1}^d |v_j|$ where $\mathbf{v}=(v_1,v_2, \cdots, v_d)$.
\end{example}

From now on, we assume that our groups are always finitely generated unless otherwise stated.

\begin{definition} [See, for example, Definition 2.4 in \cite{On_the_absence_of_quantitatively_critical_measure_equivalence_couplings}]
   Let $\omega: \mathbb{R}_{\geqslant 0} \to \mathbb{R}_{\geqslant 0}$ be a non-decreasing function and let $G,H$ be two groups.
   A measure equivalence coupling $\mathcal{C}=(X, X_G, X_H, \mu)$ from $G$ to $H$ is said to be  $\omega$-\emph{integrable} if for every $g \in G$, there is $c_g>0$ such that
   \begin{equation} \label{equation_omega-integrability_for_measure_equivalence_coupling}
   \int_{X_H} \omega \left( \frac{|\kappa^{\mathcal{C}}(g,x)|_H}{c_g}\right) \mathrm{d} \mu(x)< \infty.
   \end{equation}
   Moreover, $\mathcal{C}$ is said to be $(\omega_1, \omega_2)$-\emph{integrable} for two non-decreasing functions $\omega_1, \omega_2: \mathbb{R}_{\geqslant 0} \to \mathbb{R}_{\geqslant 0}$, if $\mathcal{C}$ is $\omega_1$-integrable and $\mathcal{C}^{-1}$ is $\omega_2$-integrable.
   We say that a group is (\emph{quantitatively}) $(\omega_1, \omega_2)$-\emph{measure equivalent} to another group if there is a $(\omega_1, \omega_2)$-integrable measure equivalence coupling from the first group to the second group.
\end{definition}

\begin{definition} [See, for example, Definition 2.1 in \cite{Quantitative_orbit_equivalence_for_Z_odometers} or Definition 1.3 in \cite{Building_prescribed_quantitative_orbit_equivalence_with_the_ntegers}]
    Let $\omega: \mathbb{R}_{\geqslant 0} \to \mathbb{R}_{\geqslant 0}$ be a non-decreasing function and let $G \curvearrowright (X, \mu)$ and $H \curvearrowright (Y, \nu)$ be two free p.m.p. actions.
   An orbit equivalence $\Phi: X \to Y$ is said to be  $\omega$-\emph{integrable} if for every $g \in G$, there is $c_g>0$ such that
   \begin{equation} \label{equation_omega-integrability_for_orbit_equivalence}
   \int_{X} \omega \left( \frac{|\kappa^{\Phi}(g,x)|_H}{c_g}\right) \mathrm{d} \mu(x)< \infty.
   \end{equation}
   Moreover, $\Phi$ is said to be $(\omega_1, \omega_2)$-\emph{integrable} for two non-decreasing functions $\omega_1, \omega_2: \mathbb{R}_{\geqslant 0} \to \mathbb{R}_{\geqslant 0}$, if $\Phi$ is $\omega_1$-integrable and $\Phi^{-1}$ is $\omega_2$-integrable.
   We say that a free p.m.p. action is (\emph{quantitatively}) $(\omega_1, \omega_2)$-\emph{orbit equivalent} to the other one if there is a $(\omega_1, \omega_2)$-integrable orbit equivalence from the first action to the second action.
   A group $G$ is said to be (\emph{quantitatively}) $(\omega_1, \omega_2)$-\emph{orbit equivalent} to a group $H$ if some free p.m.p. $G$-action is $(\omega_1, \omega_2)$-orbit equivalent to a free p.m.p. $H$-action.
   \end{definition}

    \begin{remark} \label{Remark_checking_integrability_for_generating_set}
    According to Proposition 2.22 in \cite{Quantitative_measure_equivalence_between_amenable_groups}, to see $\omega$-integrability for a measure equivalence or orbit equivalence, one only needs to check (\ref{equation_omega-integrability_for_measure_equivalence_coupling}) or (\ref{equation_omega-integrability_for_orbit_equivalence}) for a generating set of $G$.
   \end{remark}

   For two functions $f_1,f_2 : \mathbb{R}_{\geqslant 0} \to \mathbb{R}_{\geqslant 0}$, when we write $f_1=o(f_2)$, we mean $f_1(t)=o(f_2(t))$ as $t \to +\infty$; that is, $\lim_{t \to +\infty}f_1(t)/f_2(t) = 0$.
   The notation will be simplified as follows:
   (where $\kappa$ is a cocycle either for some measure equivalence coupling or for some orbit equivalence)
   \begin{enumerate}
        \item $\kappa$ is $L^{0}$-integrable if there is no restriction (since it is always measurable);
        \item $\kappa$ is $L^{\infty}$-integrable if $\kappa$ is essentially bounded;
        \item $\kappa$ is $t^{<\alpha}$-integrable for some $0 <\alpha <\infty$, if it is $t^{\alpha'}$-integrable for all $0<\alpha'<\alpha$;
        \item $\kappa$ is sub-$\omega$-integrable if it is $\omega'$-integrable for all $\omega'=o(\omega)$.
    \end{enumerate}
    With the above notation, $(L^0,L^0)$-measure equivalence (resp. $(L^0,L^0)$-orbit equivalence) is nothing but measure equivalence (resp. orbit equivalence).

    The relation of (quantitative) measure equivalence and (quantitative) orbit equivalence is revealed by the following result, which is the quantitative counterpart of $\mathbf{P}_{\mathrm{ME}} \mathbf{5}$ in \cite{Examples_of_groups_that_are_measure_equivalent_to_the_free_group} (see also Theorem 3.3 in \cite{Orbit_equivalence_rigidity}).

    \begin{proposition} [See, for example, Remark 2.7 in \cite{On_quantitative_orbit_equivalence_for_lamplighter-like_groups}] \label{Proposition_QOE_is_stronger_than_QME}
    Two groups $G,H$ are $(\omega_1, \omega_2)$-orbit equivalence if and only if there is a $(\omega_1, \omega_2)$-integrable measure equivalence coupling $(X, X_G, X_H, \mu)$ with $X_G= X_H$.
    Here $\omega_i$ can be replaced by $L^0$, $L^{\infty}$, $t^{<\alpha}$, or sub-$\omega$ introduced above.
    \end{proposition}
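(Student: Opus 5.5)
We prove the two implications of Proposition \ref{Proposition_QOE_is_stronger_than_QME} separately. In each direction we translate between the orbit cocycle $\kappa^{\Phi}$ and the measure equivalence cocycles $\kappa^{\mathcal{C}}$, $\kappa^{\mathcal{C}^{-1}}$, and check that these cocycles agree almost everywhere under a measure-preserving identification. Once that is done, the integrals in (\ref{equation_omega-integrability_for_orbit_equivalence}) and (\ref{equation_omega-integrability_for_measure_equivalence_coupling}) coincide, with the same constants $c_g$. This is why the statement also holds for $L^0$, $L^\infty$, $t^{<\alpha}$ and sub-$\omega$: each of these conditions is defined through integrability against a family of $\omega$'s, or through essential boundedness of the same function.

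\emph{Orbit equivalence gives a coupling.} Let $\Phi:X\to Y$ be an $(\omega_1,\omega_2)$-integrable orbit equivalence between free p.m.p. actions $G\curvearrowright X$ and $H\curvearrowright Y$. We use the standard construction.
\begin{enumerate}
\item Set $\Omega:=X\times H$ with measure $\mu\otimes(\text{counting})$.
\item Let $G$ act by $g\cdot(x,h)=(gx,\,h\,\kappa^{\Phi}(g,x)^{-1})$, and let $H$ act by $h'\cdot(x,h)=(x,h'h)$. Up to the order convention, the cocycle identity (\ref{Equation_cocycle_identity_for_orbit_equivalence_cocycle}) shows that the $G$-formula is an action.
\item The two actions commute, since $H$ acts on the left coordinate of $H$ and $G$ acts on the right.
\item Both actions are free and measure preserving. $H$ acts by translation, and $G$ is free because its action on $X$ is free.
\item $X\times\{e_H\}$ is a fundamental domain for $H$. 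It is also a fundamental domain for $G$: the $G$-orbit of $(x,h)$ meets $X\times\{e_H\}$ exactly at the unique $g$ with $\kappa^{\Phi}(g,x)=h$, and this $g$ exists and is unique because $\Phi$ is a bijection of orbits and both actions are free.
\end{enumerate}
So we may take $X_G=X_H=X\times\{e_H\}$. Under the identification $x\mapsto(x,e_H)$, the induced $G$-action $\star$ on $X_H$ is the original action on $X$, and $\kappa^{\mathcal{C}}(g,x)=\kappa^{\Phi}(g,x)$ (possibly up to inversion, which does not change word length). The roles of $G$ and $H$ are symmetric via $\Phi^{-1}$, so the same check gives $\kappa^{\mathcal{C}^{-1}}=\kappa^{\Phi^{-1}}$ after transporting by $\Phi$. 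The integrability conditions therefore coincide.

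\emph{A coupling with a common fundamental domain gives an orbit equivalence.} Let $\mathcal{C}=(\Omega,X_G,X_H,\mu)$ be a coupling with $Z:=X_G=X_H$.
\begin{enumerate}
\item Let $G$ act on $Z$ by the induced action $\star$, and let $H$ act on $Z$ by the induced action $\ast$ coming from $\mathcal{C}^{-1}$. Both are measure preserving on the finite measure space $Z$; normalize $\mu|_Z$ to a probability measure.
\item Freeness of the $\star$-action: if $g\star x=x$, then $gx=hx$ for some $h\in H$. Hence $h^{-1}g x=x$, and since the actions commute, freeness of the $G\times H$-action, deduced from freeness and smoothness of each factor, gives $g=e_G$. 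I expect this freeness check to be the main obstacle, though still routine.
\item The orbits agree: $G\star x=Z\cap HGx=H\ast x$. Hence the identity map $\mathrm{id}:Z\to Z$ is an orbit equivalence.
\item Its cocycle satisfies $g\star x=\kappa^{\mathrm{id}}(g,x)\ast x$. Since $\kappa^{\mathcal{C}}(g,x)gx=g\star x$, we get $g\star x=\kappa^{\mathcal{C}}(g,x)^{\pm1}\ast x$, with the sign determined by the conventions.
\end{enumerate}
Thus $|\kappa^{\mathrm{id}}(g,x)|_H=|\kappa^{\mathcal{C}}(g,x)|_H$ almost everywhere, and symmetrically for the inverse. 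Normalizing the measure only rescales the integrals by a constant, so finiteness is preserved, and the integrability conditions transfer.
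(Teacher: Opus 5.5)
The paper gives no proof of this proposition; it only cites Remark 2.7 of \cite{On_quantitative_orbit_equivalence_for_lamplighter-like_groups}, so your argument has to stand on its own. Your first direction, the coupling $X\times H$ with $X_G=X_H=X\times\{e_H\}$, is the standard construction and is correct. A direct computation even gives $\kappa^{\mathcal{C}}(g,(x,e_H))=\kappa^{\Phi}(g,x)$ and $\kappa^{\mathcal{C}^{-1}}(h,(x,e_H))=\kappa^{\Phi^{-1}}(h,\Phi(x))$ exactly.

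\textbf{The gap is step 2 of the converse.} Freeness and smoothness of the two factor actions do \emph{not} make the $G\times H$-action free. Take $G=H=\mathbb{Z}$, both acting on $\Omega=\mathbb{Z}\times[0,1]$ (counting measure times Lebesgue measure) by translating the first coordinate. Both actions are free, measure preserving, smooth and commuting, and $Z=\{0\}\times[0,1]$ is a common fundamental domain of measure $1$. Yet $(g,-g)$ fixes every point, and the induced actions $\star$ and $\ast$ on $Z$ are both trivial. In general the stabilizer of a point in $G\times H$ only meets $G\times\{e_H\}$ and $\{e_G\}\times H$ trivially, and can otherwise be nontrivial. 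So your map $\mathrm{id}_Z$ need not be an orbit equivalence between \emph{free} actions, which the definition of $(\omega_1,\omega_2)$-orbit equivalence of groups requires. Moreover, $\kappa^{\mathrm{id}}$ is not even well defined when $\ast$ is not free.

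\textbf{The repair is to modify the coupling first.} Choose a free p.m.p. action $G\times H\curvearrowright(W,\lambda)$, for example a Bernoulli shift. Let $G$ and $H$ act on $\Omega\times W$ by $g(x,w)=(gx,(g,e_H)w)$ and $h(x,w)=(hx,(e_G,h)w)$. Then:
\begin{enumerate}
\item These actions are free, measure preserving and commuting, and $Z\times W$ is a common fundamental domain of measure $\mu(Z)$.
\item The new cocycle at $(x,w)$ is $\kappa^{\mathcal{C}}(g,x)$, independent of $w$, and symmetrically for $\mathcal{C}^{-1}$. Hence every integrability condition in the statement is unchanged.
\item The relation $hg(x,w)=(x,w)$ forces $(g,h)w=w$, hence $(g,h)=(e_G,e_H)$ for a.e.\ $w$. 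So the $G\times H$-action is free, and therefore so are the induced actions on $Z\times W$.
\end{enumerate}
With this modification your steps 3 and 4 go through, and in fact $\kappa^{\mathrm{id}}=\kappa^{\mathcal{C}}$ exactly, with no inversion.
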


    Note that (quantitative) orbit equivalence is stronger than (quantitative) measure equivalence.

\subsection{Elementary properties of $\varpi^{(k)}_{\alpha}$}

Here we establish some facts about $\varpi^{(k)}_{\alpha}$.
Although the proofs are all elementary, we include them here for completeness and the reader's convenience.
By abuse of notation, we write $\varpi^{(k)}_{\alpha,0}=\varpi^{(k)}_{\alpha}$ .

\begin{proposition}
    Given two numbers $\alpha>0$, $\epsilon \geqslant 0$ and an integer $k \geqslant 1$, there is a $T:=T_{\alpha, k, \epsilon}>0$ such that $ f_{\alpha,k, \epsilon}(t):= t^\alpha \cdot \left(\Lambda_{k-1}(t) \cdot (\log^{(k)}{t})^{1+\epsilon} \right)^{-1}$ is continuous, positive, and strictly increasing on the interval $[T, +\infty)$.
\end{proposition}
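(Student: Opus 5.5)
We first pick $T_0$ so that every iterated logarithm appearing is well defined and strictly positive on $[T_0,+\infty)$. For instance, take $T_0$ larger than the $k$-fold iterated exponential tower $e^{e^{\cdots^{e}}}$. Then $\log^{(j)}t>1$ for all $1\leqslant j\leqslant k$ and $t>T_0$. Each $\log^{(j)}$ is a composition of continuous functions on this range, so $f:=f_{\alpha,k,\epsilon}$ is continuous and positive on $[T_0,+\infty)$. Only strict monotonicity needs work, and we will obtain it by showing that the logarithmic derivative of $f$ is eventually positive.

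The key computation is
$$\frac{f'(t)}{f(t)}=\frac{\alpha}{t}-\sum_{j=1}^{k-1}\frac{(\log^{(j)}t)'}{\log^{(j)}t}-(1+\epsilon)\frac{(\log^{(k)}t)'}{\log^{(k)}t}.$$
By the chain rule and induction on $j$,
$$(\log^{(j)}t)'=\frac{1}{t\,\Lambda_{j-1}(t)}.$$
Hence
$$\frac{(\log^{(j)}t)'}{\log^{(j)}t}=\frac{1}{t\,\Lambda_j(t)}.$$
Substituting this gives
$$\frac{f'(t)}{f(t)}=\frac1t\Big(\alpha-\sum_{j=1}^{k-1}\frac{1}{\Lambda_j(t)}-\frac{1+\epsilon}{\Lambda_k(t)}\Big).$$

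Each $\Lambda_j(t)\to+\infty$ as $t\to+\infty$, since its first factor $\log t$ tends to infinity and the other factors exceed $1$ on our range. Because $k$ is fixed, the bracket tends to $\alpha>0$. We can therefore choose $T\geqslant T_0$ such that the bracket exceeds $\alpha/2$ for $t\geqslant T$; concretely, it suffices that $\Lambda_1(t)=\log t\geqslant 2(k+\epsilon)/\alpha$, since $\Lambda_j\geqslant\Lambda_1$ there. Then $f'>0$ on $[T,+\infty)$, and the mean value theorem yields strict increase.

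The only mildly delicate step is the derivative identity for iterated logarithms and making $T$ explicit. Everything else is routine. For later use, the same choice of $T$ makes the piecewise definitions of $\varpi^{(k)}_{\alpha}$ and $\varpi^{(k)}_{\alpha,\epsilon}$ continuous, positive and non-decreasing on $\mathbb{R}_{\geqslant0}$: they are constant on $[0,T)$ and agree with $f$ at $T$. In the case $\epsilon=0$, the function $f$ equals $t^\alpha/\Lambda_k(t)$.
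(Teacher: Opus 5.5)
Your proof is correct and follows the same route as the paper: compute the logarithmic derivative $f'/f=\frac1t\bigl(\alpha-\sum_{j=1}^{k-1}\Lambda_j(t)^{-1}-(1+\epsilon)\Lambda_k(t)^{-1}\bigr)$ and choose $T$ so that the bracket is positive. Your sign on the $\epsilon$-term is the correct one (the paper's display writes $+\epsilon/\Lambda_k(t)$, a typo, although its choice of $T$ uses the correct condition), and your explicit threshold $\log t\geqslant 2(k+\epsilon)/\alpha$ makes the paper's choice of $T$ concrete.
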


\begin{proof}
    We will work with fixed $\alpha, k, \epsilon$ and omit all subscripts w.r.t. these quantities.
    We first take a sufficiently large $T'>0$ such that $\log^{(k)}(T')$ is defined and positive.
    Then $f$ is positive on $[T', +\infty)$.
    To see monotonicity, we compute the (logarithmic) derivative:
     $$ \frac{\mathrm{d}f(t)}{\mathrm{d} t} \cdot \frac{1}{f(t)} =\frac{\mathrm{d} \left(\log{f} (t)\right)}{\mathrm{d}t}=\frac{1}{t}  \left( \alpha-  \sum_{j=1}^{k} \frac{1}{\Lambda_j(t)} +  \frac{\epsilon}{\Lambda_k(t)} \right), \quad \forall t >T'.$$
     So we are done by taking $T>T'$ such that $$\sum_{j=1}^k \frac{1}{\Lambda_j(t)} + \frac{\epsilon}{\Lambda_k(t)} \leqslant \sum_{j=1}^{k} \frac{1}{\Lambda_j(T)} + \frac{\epsilon}{\Lambda_k(T)}<\alpha, \quad \forall t\geqslant T>0.$$
     The continuity of $f$ on $[T, +\infty)$ is trivial.
\end{proof}

\begin{proposition}\label{Proposition_the_same_integrability_up_to_constant}
        Let $(X,\mu)$ be a finite measure space and $f: X \to \mathbb{R}_{\geqslant 0}$ a measurable function.
        Then for any given numbers $\alpha, c_1, c_2>0$ and integer $k \geqslant 1$, we have $$\int_X \varpi^{(k)}_{\alpha}\left(f(x)/c_1\right) \mathrm{d}\mu(x) < \infty \iff \int_X \varpi^{(k)}_{\alpha} \left(f(x)/c_2 \right) \mathrm{d}\mu(x) < \infty.$$
\end{proposition}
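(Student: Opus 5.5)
By symmetry it suffices to prove one implication. I will assume $\int_X \varpi^{(k)}_{\alpha}(f/c_1)\,\mathrm{d}\mu<\infty$ and deduce finiteness for $c_2$. If $c_2\geqslant c_1$, this is immediate: $\varpi^{(k)}_{\alpha}$ is non-decreasing, so $\varpi^{(k)}_{\alpha}(f/c_2)\leqslant \varpi^{(k)}_{\alpha}(f/c_1)$ pointwise. So the only case to handle is $c_2<c_1$. Put $\lambda:=c_1/c_2>1$. Then $f/c_2=\lambda\cdot (f/c_1)$, and the whole task reduces to a doubling-type estimate for $\varpi^{(k)}_{\alpha}$:
$$\varpi^{(k)}_{\alpha}(\lambda s)\leqslant C\,\varpi^{(k)}_{\alpha}(s)+C',\qquad \text{for all } s\geqslant 0,$$
where $C,C'$ are constants depending only on $\alpha,k,\lambda$. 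Once this is in hand, integrating over the finite measure space gives
$$\int_X \varpi^{(k)}_{\alpha}(f/c_2)\,\mathrm{d}\mu\leqslant C\int_X\varpi^{(k)}_{\alpha}(f/c_1)\,\mathrm{d}\mu+C'\mu(X)<\infty.$$

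To prove the estimate, I split into regions according to the threshold $T=T_{\alpha,k}$.

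\textbf{Small $s$.} For $s\leqslant S_0$, with $S_0\geqslant T$ a large constant fixed below, monotonicity gives $\varpi^{(k)}_{\alpha}(\lambda s)\leqslant \varpi^{(k)}_{\alpha}(\lambda S_0)=:C'$. This covers the part where $s$ lies below $T$ (where $\varpi^{(k)}_{\alpha}$ is constant) and any bounded range.

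\textbf{Large $s$.} For $s\geqslant S_0$, both $s$ and $\lambda s$ exceed $T$, so
$$\frac{\varpi^{(k)}_{\alpha}(\lambda s)}{\varpi^{(k)}_{\alpha}(s)}=\lambda^{\alpha}\prod_{j=1}^{k}\frac{\log^{(j)} s}{\log^{(j)}(\lambda s)}.$$
Since $\lambda>1$ and every iterated logarithm is increasing where it is defined, each factor $\log^{(j)}s/\log^{(j)}(\lambda s)$ is at most $1$, provided $S_0$ is large enough that all the $\log^{(j)}$ are positive. So the ratio is at most $C:=\lambda^{\alpha}$.

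The only point requiring care is choosing $S_0$ so that all the iterated logarithms are defined and positive at $s$. The earlier proposition guarantees this for $s\geqslant T$, so taking $S_0=T$ suffices. I expect no genuine obstacle here; the argument is just the observation that $\varpi^{(k)}_{\alpha}$ has polynomial (doubling) growth. In fact one could even obtain $C\to\lambda^{\alpha}$, but the cruder bound is all that is needed.
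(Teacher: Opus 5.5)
Your proof is correct and follows essentially the same route as the paper: both reduce the claim to a doubling-type estimate for $\varpi^{(k)}_{\alpha}$, with a bounded range of arguments handled separately. The only difference is minor. The paper obtains a purely multiplicative bound $\varpi^{(k)}_{\alpha}(ct)\leqslant d\,\varpi^{(k)}_{\alpha}(t)$ from the limit $c^{\alpha}$ of the ratio together with boundedness on compacts. You instead get the explicit bound $\varpi^{(k)}_{\alpha}(\lambda s)\leqslant \lambda^{\alpha}\varpi^{(k)}_{\alpha}(s)+C'$ from monotonicity of the iterated logarithms, and absorb $C'$ using $\mu(X)<\infty$.
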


\begin{proof}
        This is a direct consequence of the fact that for any fixed number $\alpha, c>0$ and integer $k \geqslant 1$, there is $d>0$ such that 
        \begin{equation} \label{Equation_aim}
        \varpi^{(k)}_{\alpha}(ct) \leqslant d \cdot \varpi^{(k)}_{\alpha}(t), \quad \forall t \geqslant 0. 
        \end{equation}
        To see (\ref{Equation_aim}), we note that
        $$ \lim_{t \to +\infty} \frac{\varpi^{(k)}_{\alpha}(ct)}{\varpi^{(k)}_{\alpha}(t)} = c^{\alpha} \cdot \lim_{t \to +\infty} \prod_{j=1}^{k} \frac{\log^{(j)}(ct)}{\log^{(j)}(t)}= c^{\alpha} \in (0,+\infty).$$
       We are done since $\varpi^{(k)}_{\alpha}$ is bounded on any compact subset.
\end{proof}

The following result plays an important role in proving Lemma \ref{Lemma_Estimating_sum_over_edges}.

\begin{lemma} \label{Lemma_elementary_lemma_lower_bound_of_varpi_p}
    Given a number $0<\alpha<1$ and an integer $k \geqslant 1$, there is a number $A_{\alpha,k}>0$ and an integer $j_{\alpha,k}>0$ depending only on $\alpha$ and $k$ such that $\log^{(k)}(2^j)>0$ for all $j \geqslant j_{\alpha, k}$ and 
     $$\varpi^{(k)}_{\alpha}(t) \geqslant A_{\alpha,k} \cdot \sum_{j=j_{\alpha,k}}^{\infty}  \frac{2^{j\alpha}}{\Lambda_{k}(2^j)} \cdot \min\left\{\frac{t}{2^j}, 1\right\} , \quad \forall t \geqslant 0.$$
\end{lemma}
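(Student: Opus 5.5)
The approach is to split the sum on the right-hand side at the dyadic scale of $t$ and show that each of the two pieces is $O(\varpi^{(k)}_{\alpha}(t))$ for large $t$; small $t$ is then handled by positivity and continuity. First I fix $j_{\alpha,k}$ so large that $\log^{(k)}(2^j)>0$ for $j\geqslant j_{\alpha,k}$. I also want $2^{j_{\alpha,k}}\geqslant T_{\alpha,k}$, so that $\varpi^{(k)}_\alpha(2^j)=2^{j\alpha}/\Lambda_k(2^j)$ there. Write $a_j:=2^{j\alpha}/\Lambda_k(2^j)$. Let $S(t)$ denote the sum on the right-hand side. It suffices to show that $S(t)\leqslant C\,\varpi^{(k)}_\alpha(t)$ for all $t\geqslant 0$, and then take $A_{\alpha,k}=1/C$.

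\textbf{Step 1: bounded range.} For $0\leqslant t\leqslant 2^{j_{\alpha,k}}$, every term satisfies $\min\{t/2^j,1\}=t/2^j$. Hence
\[
S(t)=t\sum_{j\geqslant j_{\alpha,k}} 2^{-j(1-\alpha)}/\Lambda_k(2^j)\leqslant C_0\, t.
\]
The series converges because $\alpha<1$ and $\Lambda_k(2^j)\geqslant 1$ for large $j$ (enlarge $j_{\alpha,k}$ if needed). Since $\varpi^{(k)}_\alpha$ is bounded below by the positive constant $\varpi^{(k)}_\alpha(0)$, we get $S(t)\leqslant C_0 2^{j_{\alpha,k}}\leqslant C_1\varpi^{(k)}_\alpha(t)$ on this range.

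\textbf{Step 2: large $t$.} For $t> 2^{j_{\alpha,k}}$, let $J$ satisfy $2^J\leqslant t<2^{J+1}$, and split the sum as $S(t)=\sum_{j\leqslant J}a_j+t\sum_{j>J}a_j2^{-j}$.

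For the head, I use that $a_{j+1}/a_j=2^\alpha\prod_{i}\log^{(i)}(2^j)/\log^{(i)}(2^{j+1})\to 2^\alpha>1$. So for $j$ large (again enlarging $j_{\alpha,k}$) the ratio is at least $r:=2^{\alpha/2}>1$. The head is then dominated by a geometric series, giving $\sum_{j\leqslant J}a_j\leqslant a_J\cdot r/(r-1)$.

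For the tail, $a_j2^{-j}=2^{-j(1-\alpha)}/\Lambda_k(2^j)$ has ratio tending to $2^{\alpha-1}<1$. For large $j$ the ratio is at most $s:=2^{(\alpha-1)/2}<1$, so
\[
\sum_{j>J}a_j2^{-j}\leqslant a_{J}2^{-J}\cdot s/(1-s),
\]
and multiplying by $t<2^{J+1}$ gives at most $2a_J\, s/(1-s)$.

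Thus $S(t)\leqslant C_2 a_J=C_2\varpi^{(k)}_\alpha(2^J)\leqslant C_2\varpi^{(k)}_\alpha(t)$, using monotonicity of $\varpi^{(k)}_\alpha$.

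The only delicate point is the ratio limits, which amounts to showing that $\log^{(i)}(2^{j+1})/\log^{(i)}(2^j)\to1$ for each $i$. This follows by induction on $i$, in the same way as in the proof of Proposition \ref{Proposition_the_same_integrability_up_to_constant}. The argument requires choosing $j_{\alpha,k}$ large enough that all the eventual ratio bounds hold. This is harmless, because enlarging $j_{\alpha,k}$ only removes terms from the sum, so every bound above is preserved.
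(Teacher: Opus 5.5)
Your proof is correct and follows the paper's argument. It uses the same choice of $j_{\alpha,k}$ via $a_{j+1}/a_j\to 2^{\alpha}$ (so the head sum is at most a constant times $a_J$), the same linear bound together with $\varpi^{(k)}_\alpha(0)>0$ for small $t$, and the same dyadic split $2^J\leqslant t<2^{J+1}$ for large $t$. The only cosmetic difference is in the tail: you apply a ratio bound to $a_j2^{-j}$ directly to get $O(a_J)$, whereas the paper uses monotonicity of $\Lambda_k$ to get $O(a_{J+1})$ and then the boundedness of $a_{J+1}/a_J$.
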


\begin{proof}
    We now fix $k$ and $\alpha$.
    Put $a_j:=\frac{2^{j\alpha}}{\Lambda_{k}(2^j)}>0$ for each (sufficiently large) $j$ and we see 
    $$\lim_{j \to \infty} \frac{a_{j}}{a_{j+1}}= \lim_{j \to \infty} 2^{-\alpha} \frac{\Lambda_k(2^{j+1})}{\Lambda_k(2^{j})} = 2^{-\alpha} \in (0,1).$$
    So there is an integer $j_{\alpha, k}>0$ such that $2^{j_{\alpha,k}} \geqslant T_{\alpha, k}$ and $$a_j \leqslant \frac{1+2^{-\alpha}}{2} \cdot a_{j+1}, \quad \forall j \geqslant j_{\alpha,k}.$$
     This then implies that there is $C_{\alpha,k}>0$ (independent of $J$) such that 
     \begin{equation} \label{Equation_need}
        \sum_{j=j_{\alpha,k}}^{J}  \frac{2^{j\alpha}}{\Lambda_{k}(2^j)} = \sum_{j=j_{\alpha,k}}^{J} a_j \leqslant C_{\alpha,k} \cdot a_J= C_{\alpha,k} \cdot \frac{2^{J\alpha}}{\Lambda_{k}(2^J)}, \quad \forall J \geqslant j_{\alpha,k}.  
     \end{equation}

     Now if $0 \leqslant t < 2^{j_{\alpha,k}}$, we see 
     $$\sum_{j=j_{\alpha,k}}^{\infty}  \frac{2^{j\alpha}}{\Lambda_{k}(2^j)} \cdot \min\left\{\frac{t}{2^j}, 1\right\}= t \cdot \sum_{j=j_{\alpha,k}}^{\infty}  \frac{2^{j(\alpha-1)}}{\Lambda_{k}(2^j)} \leqslant 2^{j_{\alpha,k}}\cdot \sum_{j=j_{\alpha,k}}^{\infty}  \frac{2^{j(\alpha-1)}}{\Lambda_{k}(2^j)} .$$
     Note that $2^{j_{\alpha,k}}\cdot \sum_{j=j_{\alpha,k}}^{\infty}  \frac{2^{j(\alpha-1)}}{\Lambda_{k}(2^j)}$ converges since $\alpha<1$.
     If we denote by $B_{\alpha,k}>0$ the sum of this series, then we have 
     \begin{equation} \label{Equation_inequality_1}
        \varpi^{(k)}_{\alpha}(t) \geqslant \varpi^{(k)}_{\alpha}(0) \geqslant \varpi^{(k)}_{\alpha}(0) \cdot B_{\alpha, k}^{-1} \cdot \left( \sum_{j=j_{\alpha,k}}^{\infty}  \frac{2^{j(\alpha-1)}}{\Lambda_{k}(2^j)} \cdot  \min\left\{\frac{t}{2^j}, 1\right\} \right), \quad \forall t \in [0, 2^{j_{\alpha,k}}). 
     \end{equation}

    Now we suppose $2^J \leqslant t< 2^{J+1}$ for some $J \geqslant j_{\alpha,k}$.
    Then we have \begin{align*}
       \sum_{j=j_{\alpha,k}}^{\infty}  \frac{2^{j\alpha}}{\Lambda_{k}(2^j)} \cdot \min\left\{\frac{t}{2^j}, 1\right\} 
       & = \sum_{j=j_{\alpha,k}}^{J} \frac{2^{j\alpha}} {\Lambda_{k}(2^j)} + t \cdot \sum_{j=J+1}^{J} \frac{2^{j(\alpha-1)}} {\Lambda_{k}(2^j)}\\
     \text{by (\ref{Equation_need}) }  & \leqslant C_{\alpha,k} \cdot \frac{2^{J\alpha}}{\Lambda_{k}(2^J)} + \frac{t}{\Lambda_{k}(2^{J+1})} \cdot \sum_{j=J+1}^{\infty} 2^{j(\alpha-1)} \\
   \text{(since  $t\leqslant 2^{J+1}$) }  & \leqslant C_{\alpha,k} \cdot \frac{2^{J\alpha}}{\Lambda_{k}(2^J)} +  \frac{2^{J+1}}{\Lambda_{k}(2^{J+1})} \cdot \sum_{j=J+1}^{\infty} 2^{j(\alpha-1)}.\\
     & = C_{\alpha,k} \cdot \frac{2^{J\alpha}}{\Lambda_{k}(2^J)} + \frac{2^{(J+1) \alpha}}{\Lambda_{k}(2^{J+1})} \cdot \sum_{j=J+1}^{\infty} 2^{(j-J-1)(\alpha-1)}\\
     & = C_{\alpha,k} \cdot  a_J + D_{\alpha,k} \cdot a_{J+1}
    \end{align*}
where $D_{\alpha,k}=\sum_{j=0}^{\infty} 2^{j(\alpha-1)}>0$ (it converges since $\alpha<1$).
Since $\{a_{J+1} / a_{J}\}_{J \geqslant j_{\alpha, k}}$ is bounded above, we see there is $E_{\alpha, k}>0$ (independent of $J$) such that for all $J \geqslant j_{\alpha, k}$, we have 
\begin{equation} \label{Equation_inequality_2}
    \sum_{j=j_{\alpha,k}}^{\infty}  \frac{2^{j\alpha}}{\Lambda_{k}(2^j)} \cdot \min\left\{\frac{t}{2^j}, 1\right\}  \leqslant E_{\alpha,k} \cdot a_J \leqslant E_{\alpha,k} \cdot \varpi^{(k)}_{\alpha} (t), \quad \forall t \in [2^J,2^{J+1}).
\end{equation}

In view of (\ref{Equation_inequality_1}) and (\ref{Equation_inequality_2}), we are done by taking $A_{\alpha,k}:= \min\left\{  E_{\alpha,k}^{-1}, \  \varpi^{(k)}_{\alpha}(0) \cdot B_{\alpha, k}^{-1} \right\}>0$.
\end{proof}

\section{Combinatorial lemmas in grids}

\subsection{Edge-isoperimetric inequality}
Here we recall a combinatorial result in \cite{Edge-isoperimetric_inequalities_in_the_grid} due to Bollob\'{a}s and Leader.
 For two real numbers $k<l$, we will denote by $[[k,l]]:=[k,l] \cap \mathbb{Z}$ the integer interval between $k$ and $l$ and we simply write $[[N]]:=[[0,N-1]]$ for a positive integer $N$.
For positive integers $n,N$, we consider the (undirected) graph $\mathcal{G}=\mathcal{G}_{N,n}=(V,E)=(V_{N,n} , E_{N,n})$ where $V=[[N]]^n$ and $\{\mathbf{v}, \mathbf{u}\} \in E$ if and only if $\mathbf{v-u} \in \{ \pm\mathbf{e}_j: 1 \leqslant j \leqslant n\}$.
We call $\mathcal{G}$ a \emph{grid}.

For $A \subset V$, the \emph{edge-boundary} of $A$ is defined to be $$\partial_{\mathcal{G}}(A):=\{ \{\mathbf{v}, \mathbf{u}\} \in E : \mathbf{u} \in A , \mathbf{v} \in V \setminus A  \}.$$
We have the following edge-isoperimetric inequality in the grid $\mathcal{G}_{N,n}$.

\begin{lemma} [Theorem 3 in \cite{Edge-isoperimetric_inequalities_in_the_grid}] \label{Lemma_edge_isoperimetric_inequality_in_grid}
    Let $\mathcal{G}=\mathcal{G}_{N,n}$ be as above.
    If $A \subset V=[[N]]^n$ such that $|A| \leqslant N^n/2$, then we have $$|\partial_{\mathcal{G}}(A)| \geqslant \min \{ |A|^{1-1/r} r N^{-1+n/r}: r=1,2, \cdots, n\} \geqslant C_n |A|^{1-1/n},$$
where $C_n := \min \{ r \cdot 2^{1/r-1/n}: r=1,2, \cdots , n\}>0$ is a constant depending only on $n$.
\end{lemma}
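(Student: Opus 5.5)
The first inequality is exactly Theorem 3 of Bollob\'as and Leader \cite{Edge-isoperimetric_inequalities_in_the_grid}, so I will quote it and not reprove it. The only work is the second inequality: I will show that for each fixed $r\in\{1,\dots,n\}$ the $r$-th term is at least $r\cdot 2^{1/r-1/n}|A|^{1-1/n}$. Taking the minimum over $r$ then gives $C_n|A|^{1-1/n}$.

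If $A=\emptyset$, both sides vanish and there is nothing to prove, so assume $|A|\geqslant 1$. The hypothesis $|A|\leqslant N^n/2$ can be rewritten as $N\geqslant (2|A|)^{1/n}$. For $1\leqslant r\leqslant n$ the exponent $-1+n/r$ is non-negative, so $N\mapsto N^{-1+n/r}$ is non-decreasing on $(0,\infty)$. Hence
$$N^{-1+n/r}\geqslant (2|A|)^{(-1+n/r)/n}=(2|A|)^{1/r-1/n}.$$
Multiplying by $r|A|^{1-1/r}$ gives
$$|A|^{1-1/r}\, r\, N^{-1+n/r}\geqslant r\cdot 2^{1/r-1/n}\cdot |A|^{1-1/r+1/r-1/n}=r\cdot 2^{1/r-1/n}\,|A|^{1-1/n}.$$
Each such term is at least $C_n|A|^{1-1/n}$, by the definition $C_n=\min_r r\cdot 2^{1/r-1/n}$. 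So the minimum over $r$ is also at least $C_n|A|^{1-1/n}$.

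Finally, $C_n>0$ because it is a minimum of finitely many positive numbers, and it visibly depends only on $n$. No step here is a real obstacle. The only points to check are the sign of the exponent $-1+n/r$, which needs $r\leqslant n$ and is what lets the lower bound on $N$ be used, and the trivial case $A=\emptyset$.
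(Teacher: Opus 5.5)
Your argument is correct and matches the paper, which quotes Theorem 3 of Bollob\'as--Leader for the first inequality and leaves the second as exactly the consequence of $N^n\geqslant 2|A|$ and $-1+n/r\geqslant 0$ that you wrote out. One small quibble: for $A=\emptyset$ the factor $|A|^{0}$ (at $r=1$, or on the right-hand side when $n=1$) does not vanish, so ``both sides vanish'' is inaccurate there; the lemma should really be read for nonempty $A$, which is the only case the paper uses.
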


 \subsection{Random counting and $\varpi_{\alpha}^{(k)}$-sum over edges}

We begin with the generalization of edge-boundaries.
Let $\mathcal{G}=\mathcal{G}_{N,n}$ be the grid and $\mathscr{U}$ be a finite partition of the vertex set $V$.
An edge $\{ \mathbf{u,v}\} \in E$ is called a \emph{bridge} w.r.t $\mathscr{U}$ if $\mathbf{u,v}$ lie in different atoms of $\mathscr{U}$.
Denote by $\mathrm{Bri}(\mathcal{G}, \mathscr{U})$ the set of all bridges w.r.t. $\mathscr{U}$.
Note that $\partial_{\mathcal{G}}(A)=\mathrm{Bri}(\mathcal{G}, \mathscr{U}_A)$ where $\mathscr{U}_A:=\{A, V \setminus A\}$.
Moreover, we have
$$ 2 |\mathrm{Bri}(\mathcal{G}, \mathscr{U})|  = \sum_{U \in \mathscr{U}} |\partial_{\mathcal{G}}(U)|.$$ 

Now we let randomness come into play.
Let $m$ be a positive integer.
Given an integer $K \geqslant 2$ and $\mathbf{v} \in [[K]]^m$, we can define a partition of $\mathbb{Z}^m$ as 
$$\mathscr{Q}_{K, \mathbf{v}}:=\{\mathbf{v}+ K\mathbf{u} + [[K]]^m: \mathbf{u} \in \mathbb{Z}^m\}.$$
For each $K \geqslant 2$, we define the probability space $(\mathfrak{X}_K, \mathbb{P}_K)$ where $$\mathfrak{X}_K=\{ \mathscr{Q}_{K, \mathbf{v}} : \mathbf{v} \in [[K]]^m \}$$ and $$\mathbb{P}_K(\mathscr{Q}_{K, \mathbf{v}})=K^{-m}, \quad \forall \mathbf{v} \in [[K]]^m .$$

We begin with a simple fact.

\begin{lemma} \label{Lemma_probability_lie_in_different_atoms}
    For any $\mathbf{w} \in \mathbb{Z}^m$, we have 
    $$\mathbb{P}_K(\mathbf{0,w} \text{ lie in different atoms in }\mathscr{Q}_{K, \mathbf{v}}) \leqslant \min \left\{ \frac{|\mathbf{w}|}{K}, 1 \right\}.$$
\end{lemma}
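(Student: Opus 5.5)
The bound $1$ is trivial, so it suffices to show the probability is at most $|\mathbf{w}|/K$. The plan is to reduce to one coordinate at a time, using the product structure of the random partition.

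Write $\mathbf{w}=(w_1,\dots,w_m)$ and $\mathbf{v}=(v_1,\dots,v_m)$. Under $\mathbb{P}_K$, the coordinates $v_1,\dots,v_m$ are independent and each is uniformly distributed on $[[K]]$. The atom of $\mathscr{Q}_{K,\mathbf{v}}$ containing a point $\mathbf{x}$ is determined coordinatewise: $\mathbf{x}$ lies in $\mathbf{v}+K\mathbf{u}+[[K]]^m$ exactly when $u_i=\lfloor (x_i-v_i)/K\rfloor$ for every $i$. Hence $\mathbf{0}$ and $\mathbf{w}$ lie in different atoms if and only if there is some $i$ with
$$\lfloor -v_i/K\rfloor \neq \lfloor (w_i-v_i)/K\rfloor .$$
By the union bound, the probability is at most the sum over $i$ of the probabilities of these one-dimensional events.

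Now fix a single coordinate $i$. Translating by $-v_i$, the event says that $0$ and $w_i$ fall into different intervals of the partition of $\mathbb{Z}$ into blocks $-v_i+K u+[[K]]$, $u\in\mathbb{Z}$. This happens only if some block boundary, i.e. a point of the form $-v_i+K u$, lies in the half-open range between the two points. Concretely, if $w_i>0$ this range is $(0,w_i]$, and if $w_i<0$ it is $(w_i,0]$.

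That range contains $|w_i|$ integers. The boundary set $-v_i+K\mathbb{Z}$ is a uniformly random residue class modulo $K$. Each integer belongs to this class with probability $1/K$, so by the union bound the probability that the range meets the class is at most $|w_i|/K$.

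Summing over $i$ gives
$$\mathbb{P}_K(\cdot)\leqslant \sum_i |w_i|/K=|\mathbf{w}|/K .$$
Combined with the trivial bound $1$, this proves the claim.

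There is no real obstacle. The only care needed is the half-open convention, which ensures that exactly the boundary points strictly between the two integers (in the appropriate sense) are counted.
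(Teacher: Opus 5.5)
Your argument is correct and is essentially the paper's: reduce coordinatewise, bound the per-coordinate probability by $|w_i|/K$, and sum. The paper instead treats the case $|\mathbf{w}|_\infty \geqslant K$ separately and computes the probability exactly as $1-\prod_i(1-|w_i|/K)\leqslant \sum_i |w_i|/K$, whereas your union bound over the $|w_i|$ integers in the half-open range avoids the case split and does not use independence. One harmless slip: the block starts in coordinate $i$ are $v_i+K\mathbb{Z}$, not $-v_i+K\mathbb{Z}$; this changes nothing, since either residue class is uniformly distributed.
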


\begin{proof}
    Write $\mathbf{w}=(w_1,w_2, \cdots, w_m)$ and $|\mathbf{w}|_{\infty}=\max_{1 \leqslant i \leqslant m}|w_i|$.
    When $|\mathbf{w}|_{\infty} \geqslant K$, we have $|\mathbf{w}|\geqslant |\mathbf{w}|_{\infty} \geqslant K$ and $\mathbb{P}_K(\mathbf{0,w} \text{ lie in different atoms in }\mathscr{Q}_{K, \mathbf{v}})=1$.
    When $|\mathbf{w}|_{\infty} < K$, we have 
    $$\mathbb{P}_K(\mathbf{0,w} \text{ lie in different atoms in }\mathscr{Q}_{K, \mathbf{v}}) = 1- \prod_{i=1}^m \left(1-\frac{|w_i|}{K}\right) \leqslant  \sum_{i=1}^m \frac{|w_i|}{K} =\frac{|\mathbf{w}|}{K}.$$
    So the assertion follows.
\end{proof}

Now recall that $\mathcal{G}=\mathcal{G}_{N,n}=(V_{N,n},E_{N,n})$ is the grid in $\mathbb{Z}^n$ with $V=[[N]]^n$.
Given a map $f: V_{N,n} \to \mathbb{Z}^m$ and a random partition $\mathscr{Q}_{K, \mathbf{v}}$, we can consider the pullback $$f^{-1} \mathscr{Q}_{K, \mathbf{v}}:= \{f^{-1}\left(\mathbf{v}+ K\mathbf{u} + [[K]]^m \right): \mathbf{u} \in \mathbb{Z}^m\}$$
which is a finite partition of $V_{N,n}$ if we ignore empty sets (and we will always do so).

We will denote by $\mathbb{E}_K$ the expectation w.r.t. $\mathbb{P}_K$.
By double-counting on $\mathbb{E}_K \left(\left|\mathrm{Bri}\left(\mathcal{G}, f^{-1}\mathscr{Q}_{K, \mathbf{v}} \right)\right| \right)$, we can obtain the following result.

\begin{lemma} \label{Lemma_double-counting_of_expectation_of_number_of_bridges}
    With notation as above, if  $|f(V_{N,n})| \geqslant 2 K^m$, then we have
    $$ \frac{1}{4}C_n K^{-m/n} \left|f(V_{N,n}) \right| \leqslant \sum_{\{\mathbf{x,y}\} \in E_{N,n}} \min\left\{ \frac{|f(\mathbf{x})-f(\mathbf{y})|}{K}, 1\right\},$$
    where $C_n$ is the constant given in Lemma \ref{Lemma_edge_isoperimetric_inequality_in_grid}.
\end{lemma}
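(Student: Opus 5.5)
We compute $\mathbb{E}_K\bigl(|\mathrm{Bri}(\mathcal{G}, f^{-1}\mathscr{Q}_{K,\mathbf{v}})|\bigr)$ in two ways. The upper bound comes from linearity of expectation. The lower bound comes from applying the edge-isoperimetric inequality atom by atom, for every fixed $\mathbf{v}$.

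\emph{Upper bound.} An edge $\{\mathbf{x},\mathbf{y}\}\in E_{N,n}$ is a bridge for $f^{-1}\mathscr{Q}_{K,\mathbf{v}}$ exactly when $f(\mathbf{x})$ and $f(\mathbf{y})$ lie in different atoms of $\mathscr{Q}_{K,\mathbf{v}}$. By translation invariance of the family $\mathfrak{X}_K$ (translating by $f(\mathbf{x})$ permutes the values of $\mathbf{v}$ modulo $K$), this has the same probability as $\mathbf{0}$ and $\mathbf{w}=f(\mathbf{y})-f(\mathbf{x})$ lying in different atoms. Lemma \ref{Lemma_probability_lie_in_different_atoms} then bounds that probability by $\min\{|f(\mathbf{x})-f(\mathbf{y})|/K,1\}$. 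Summing over edges shows that the expectation is at most the right-hand side of the claim.

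\emph{Lower bound, for fixed $\mathbf{v}$.} Write $U_{\mathbf{u}}=f^{-1}(\mathbf{v}+K\mathbf{u}+[[K]]^m)$, keeping only the nonempty sets. Each $U_{\mathbf{u}}$ satisfies $|f(U_{\mathbf{u}})|\leqslant K^m$. The set $f(V)$ has at least $2K^m$ points and is covered by these atoms. We use
$$2|\mathrm{Bri}|=\sum_{\mathbf{u}}|\partial_{\mathcal{G}}(U_{\mathbf{u}})|.$$
If $|U_{\mathbf{u}}|\leqslant N^n/2$, Lemma \ref{Lemma_edge_isoperimetric_inequality_in_grid} gives
$$|\partial U_{\mathbf{u}}|\geqslant C_n|U_{\mathbf{u}}|^{1-1/n}\geqslant C_n|f(U_{\mathbf{u}})|^{1-1/n}\geqslant C_nK^{-m/n}|f(U_{\mathbf{u}})|,$$
using $|U_{\mathbf{u}}|\geqslant|f(U_{\mathbf{u}})|$ and $|f(U_{\mathbf{u}})|^{-1/n}\geqslant K^{-m/n}$. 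At most one atom can have $|U_{\mathbf{u}}|>N^n/2$; call it $U_0$. For $U_0$ we apply the inequality to the complement instead: $\partial U_0=\partial(V\setminus U_0)$ and $|V\setminus U_0|\leqslant N^n/2$. The bound on $f(U_0)$ is lost, but we only need that the remaining atoms carry enough of $f(V)$. They do, since
$$\sum_{\mathbf{u}\neq 0}|f(U_{\mathbf{u}})|\geqslant |f(V)|-K^m\geqslant |f(V)|/2.$$
Dropping $U_0$ entirely therefore gives
$$2|\mathrm{Bri}|\geqslant C_nK^{-m/n}\sum_{\mathbf{u}\neq 0}|f(U_{\mathbf{u}})|\geqslant \tfrac12 C_nK^{-m/n}|f(V)|.$$
Hence $|\mathrm{Bri}|\geqslant \tfrac14 C_nK^{-m/n}|f(V)|$ holds for every $\mathbf{v}$, and taking the expectation yields the claim. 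This step is also where the factor $\tfrac14$ and the hypothesis $|f(V)|\geqslant 2K^m$ come from.

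The main subtlety is the large atom. If no atom exceeds $N^n/2$, the bound holds even without the factor $1/2$. The other point to check is the elementary inequality $x^{1-1/n}\geqslant K^{-m/n}x$ for $1\leqslant x\leqslant K^m$, which is immediate.
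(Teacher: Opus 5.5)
Your proposal is correct and follows the paper's proof essentially step for step. Both bound the expected number of bridges above by linearity of expectation and Lemma \ref{Lemma_probability_lie_in_different_atoms}, and below by applying the isoperimetric inequality to every atom with $|U|\leqslant N^n/2$ and discarding the single possible large atom, using $|f(V)|\geqslant 2K^m$. Your aside about applying the inequality to $V\setminus U_0$ is unnecessary, since you then drop $U_0$ exactly as the paper does.
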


\begin{proof}
    On the one hand, for any fixed partition $\mathscr{Q}_{K, \mathbf{v}} \in \mathfrak{X}_K$, we see
    \begin{align*}
        2 \left|\mathrm{Bri}\left(\mathcal{G}, f^{-1}\mathscr{Q}_{K, \mathbf{v}} \right)\right|  &= \sum_{U \in f^{-1}\mathscr{Q}_{K, \mathbf{v}}} |\partial_{\mathcal{G}}(U)| \geqslant \sum_{\substack{U \in f^{-1}\mathscr{Q}_{K, \mathbf{v}};\\ |U| \leqslant N^n/2}} |\partial_{\mathcal{G}}(U)| \\
       \text{(Lemma \ref{Lemma_edge_isoperimetric_inequality_in_grid}) }  &\geqslant \sum_{\substack{U \in f^{-1}\mathscr{Q}_{K, \mathbf{v}};\\ |U| \leqslant N^n/2}} C_n \cdot |U|^{1-1/n} \\
       &\geqslant \sum_{\substack{U \in f^{-1}\mathscr{Q}_{K, \mathbf{v}};\\ |U| \leqslant N^n/2}} C_n \cdot |f(U)|^{1-1/n}.
    \end{align*}
    Note that for each $U \in f^{-1}\mathscr{Q}_{K, \mathbf{v}}$, by definition, we have $U=f^{-1}\left(\mathbf{v}+ K\mathbf{u} + [[K]]^m \right)$ for some $\mathbf{u} \in \mathbb{Z}^m $, and hence we have $$|f(U)| \leqslant |\mathbf{v}+ K\mathbf{u} + [[K]]^m|=K^m.$$
    So we obtain 
    \begin{equation}\label{Equation_upper_bound_for_bridge_in_the_proof}
        2 \left|\mathrm{Bri}\left(\mathcal{G}, f^{-1}\mathscr{Q}_{K, \mathbf{v}} \right)\right| \geqslant C_n K^{-m/n} \sum_{\substack{U \in f^{-1}\mathscr{Q}_{K, \mathbf{v}};\\ |U| \leqslant N^n/2}} |f(U)|. 
    \end{equation}

    Since $f^{-1}\mathscr{Q}_{K, \mathbf{v}}$ is a partition of $V_{N,n}=[[N]]^n$, there is at most one $U^* \in f^{-1}\mathscr{Q}_{K, \mathbf{v}}$ such that $|U^*|>N^n/2$.
    If there is no such $U^*$, then inequality (\ref{Equation_upper_bound_for_bridge_in_the_proof}) becomes $$ 2 \left|\mathrm{Bri}\left(\mathcal{G}, f^{-1}\mathscr{Q}_{K, \mathbf{v}} \right)\right| \geqslant C_n K^{-m/n} \sum_{U \in f^{-1}\mathscr{Q}_{K, \mathbf{v}}} |f(U)|=  C_n K^{-m/n} |f(V_{N,n})|. $$
    If such $U^*$ exists, we see $|f(U^*)| \leqslant K^m \leqslant |f(V_{N,n})|/2$, and hence inequality (\ref{Equation_upper_bound_for_bridge_in_the_proof}) yields  $$2 \left|\mathrm{Bri}\left(\mathcal{G}, f^{-1}\mathscr{Q}_{K, \mathbf{v}} \right)\right| \geqslant C_n K^{-m/n} \left( \left|f(V_{N,n}) \right|-|f(U^*)| \right) \geqslant \frac{1}{2} C_n K^{-m/n} |f(V_{N,n})|.$$
    So in both cases, we have $$ \left|\mathrm{Bri}\left(\mathcal{G}, f^{-1}\mathscr{Q}_{K, \mathbf{v}} \right)\right| \geqslant \frac{1}{4} C_n K^{-m/n} |f(V_{N,n})|, \quad \forall  \mathscr{Q}_{K, \mathbf{v}} \in \mathfrak{X}_K.$$
    It follows that
      $$  \mathbb{E}_K \left(\left|\mathrm{Bri}\left(\mathcal{G}, f^{-1}\mathscr{Q}_{K, \mathbf{v}} \right)\right| \right)\geqslant \frac{1}{4} C_n K^{-m/n} |f(V_{N,n})|.$$

    On the other hand, for a fixed edge $\{\mathbf{x,y}\} \in E_{N,n}$, we see $\{\mathbf{x,y}\} \in \mathrm{Bri}\left(\mathcal{G}, f^{-1}\mathscr{Q}_{K, \mathbf{v}} \right)$ if and only if $ f(\mathbf{x}), f(\mathbf{y}) $ lie in different atoms of $\mathscr{Q}_{K, \mathbf{v}}$.
    Therefore, we obtain
    \begin{align*}
        \mathbb{E}_K \left(\left|\mathrm{Bri}\left(\mathcal{G}, f^{-1}\mathscr{Q}_{K, \mathbf{v}} \right)\right| \right) &= \sum_{\{\mathbf{x,y}\} \in E_{N,n}} \mathbb{P}_K(f(\mathbf{x}), f(\mathbf{y}) \text{ lie in different atoms in }\mathscr{Q}_{K, \mathbf{v}})\\
        &=\sum_{\{\mathbf{x,y}\} \in E_{N,n}} \mathbb{P}_K(0, f(\mathbf{x})-f(\mathbf{y}) \text{ lie in different atoms in }\mathscr{Q}_{K, \mathbf{v}})\\
      \text{(Lemma \ref{Lemma_probability_lie_in_different_atoms}) }  
      & \leqslant \sum_{\{\mathbf{x,y}\} \in E_{N,n}} \min \left\{ \frac{|f(\mathbf{x})-f(\mathbf{y})|}{K}, 1 \right\}.
    \end{align*}

    So we are done by comparing the upper and lower bounds for $\mathbb{E}_K \left(\left|\mathrm{Bri}\left(\mathcal{G}, f^{-1}\mathscr{Q}_{K, \mathbf{v}} \right)\right| \right)$.
\end{proof}

\begin{lemma} \label{Lemma_Estimating_sum_over_edges}
    Let $n,m,k$ be positive integers such that $n>m$.
    There are two constants $c_{n,m,k}>0$ and $M_{n,m,k}>0$ (depending only on $n,m,k$) such that for every integer $N \geqslant 1$ and every map $f:V_{N,n} \to \mathbb{Z}^m$ with $|f(V_{N,n})| \geqslant M_{n,m,k}$, we have
    \begin{equation} \label{Equation_general_estimate_for_injective_maps}
        \sum_{\{\mathbf{x,y}\} \in E_{N,n}} \varpi^{(k)}_{m/n} \left( |f(\mathbf{x})-f(\mathbf{y})| \right) \geqslant c_{n,m,k} \cdot |f(V_{N,n})| \cdot \log^{(k+1)} {|f(V_{N,n})|}.
    \end{equation}
\end{lemma}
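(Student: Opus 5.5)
Write $\alpha := m/n \in (0,1)$ and $F := |f(V_{N,n})|$. We first apply Lemma \ref{Lemma_elementary_lemma_lower_bound_of_varpi_p} to each summand with $t = |f(\mathbf{x})-f(\mathbf{y})|$, then interchange the order of summation:
\begin{equation*}
\sum_{\{\mathbf{x,y}\} \in E_{N,n}} \varpi^{(k)}_{\alpha}\left(|f(\mathbf{x})-f(\mathbf{y})|\right) \geqslant A_{\alpha,k} \sum_{j \geqslant j_{\alpha,k}} \frac{2^{j\alpha}}{\Lambda_k(2^j)} \sum_{\{\mathbf{x,y}\} \in E_{N,n}} \min\left\{\frac{|f(\mathbf{x})-f(\mathbf{y})|}{2^j},1\right\}.
\end{equation*}
All terms are non-negative, so we may discard every $j$ outside a suitable window. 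For each $j$ with $2 \cdot 2^{jm} \leqslant F$, we use Lemma \ref{Lemma_double-counting_of_expectation_of_number_of_bridges} with $K = 2^j$. It bounds the inner sum below by $\tfrac14 C_n 2^{-jm/n} F$. The factor $2^{-jm/n} = 2^{-j\alpha}$ cancels $2^{j\alpha}$ exactly, and this is why the critical exponent $m/n$ appears. Each admissible $j$ therefore contributes at least $\tfrac14 A_{\alpha,k} C_n\, F/\Lambda_k(2^j)$.

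Set $J := \lfloor \log_2(F/2)/m \rfloor$. Provided $F \geqslant M_{n,m,k}$ is large enough that $J \geqslant j_{\alpha,k}$ (with plenty of room to spare), we get
\begin{equation*}
\text{LHS} \geqslant \tfrac14 A_{\alpha,k} C_n \, F \sum_{j=j_{\alpha,k}}^{J} \frac{1}{\Lambda_k(2^j)}.
\end{equation*}
It remains to show that $\sum_{j=j_{\alpha,k}}^{J} 1/\Lambda_k(2^j) \gtrsim \log^{(k+1)}(2^J)$ and that $\log^{(k+1)}(2^J) \asymp \log^{(k+1)} F$. This is the main (though elementary) step. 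Since $\log^{(1)}(2^j) = j\log 2$, we have $\Lambda_k(2^j) = (j\log 2)\prod_{i=2}^{k}\log^{(i)}(2^j)$, and $\log^{(i)}(2^j) = \log^{(i-1)} j + O(1)$-type errors. So $1/\Lambda_k(2^j)$ is comparable, up to constants, to $1/\Lambda_k(j)$, where the latter is interpreted with $\Lambda$ applied to $j$. The function $s \mapsto 1/\Lambda_k(s)$ is eventually decreasing, so a comparison with the integral gives
\begin{equation*}
\sum_{j \leqslant J} \frac{1}{\Lambda_k(2^j)} \geqslant c \int_{s_0}^{J} \frac{ds}{s \log s \cdots \log^{(k-1)} s} = c\left(\log^{(k)} J - \log^{(k)} s_0\right),
\end{equation*}
using $\frac{d}{ds}\log^{(k)} s = 1/\Lambda_{k}'(s)$-style telescoping, i.e. $\frac{d}{ds}\log^{(k)}s = \prod_{i=0}^{k-1}(\log^{(i)}s)^{-1}$. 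Here the product of $k$ factors $s, \log s, \ldots, \log^{(k-1)} s$ corresponds to $\Lambda_k(2^j)$ with the shift $\log^{(i)}(2^j) \approx \log^{(i-1)}(j)$.

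Finally, $J \approx \log_2 F/m$, so $\log^{(k)} J = \log^{(k+1)} F + o(1)$. Enlarging $M_{n,m,k}$ absorbs the additive constants: once $\log^{(k+1)} F$ is large, $\log^{(k)}J - \log^{(k)}s_0 \geqslant \tfrac12 \log^{(k+1)}F$. This gives \eqref{Equation_general_estimate_for_injective_maps} with $c_{n,m,k} = \tfrac1{16} c A_{\alpha,k} C_n$. The care needed is in handling the index shift between $\Lambda_k(2^j)$ and the derivative of $\log^{(k)}$ uniformly in $j$. I would do this by the crude comparison $\log^{(i)}(2^j) \leqslant 2\log^{(i-1)}(j)$ for $j$ large, which only changes constants by a factor of $2^k$.
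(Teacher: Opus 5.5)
Your proposal is correct and follows the paper's proof essentially step for step: the dyadic lower bound of Lemma \ref{Lemma_elementary_lemma_lower_bound_of_varpi_p}, then Lemma \ref{Lemma_double-counting_of_expectation_of_number_of_bridges} with $K=2^j$ for $j_{m/n,k}\leqslant j\leqslant J$ (your $J$ is exactly the paper's $J_f$), the cancellation of $2^{jm/n}$, and finally the bound $\Lambda_k(2^j)\leqslant j\,\Lambda_{k-1}(j)$ followed by an integral comparison using $\frac{\mathrm{d}}{\mathrm{d}s}\log^{(k)}s=(s\Lambda_{k-1}(s))^{-1}$. Two small slips should be fixed, though neither affects the argument: $1/\Lambda_k(2^j)$ is comparable to $1/(j\Lambda_{k-1}(j))$, not to $1/\Lambda_k(j)$ (your integral already uses the right function), and for $k=1$ one only has $\log J=\log^{(2)}F+O(1)$ rather than $+o(1)$, which enlarging $M_{n,m,k}$ absorbs exactly as you say.
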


\begin{proof}
    Note that $0<m/n<1$ since $n >m$.
    Let $j_{m/n,k}$ be the positive integer constant given in Lemma \ref{Lemma_elementary_lemma_lower_bound_of_varpi_p} (with $\alpha=m/n$).
    We first take $M'_{n,m,k}>0$ to be sufficiently large such that 
    $$J_*:= \left \lfloor \frac{1}{m} \left(-1+ \log_2{M'_{n,m,k}} \right) \right\rfloor \geqslant j_{m/n,k}$$
    Suppose that $|f(V_{N,n})| \geqslant M'_{n,m,k}$.
    In view of Lemma \ref{Lemma_elementary_lemma_lower_bound_of_varpi_p}, we have \begin{align*}
        \sum_{\{\mathbf{x,y}\} \in E_{N,n}} \varpi^{(k)}_{m/n} \left( |f(\mathbf{x})-f(\mathbf{y})| \right) 
        & \geqslant \sum_{\{\mathbf{x,y}\} \in E_{N,n}} A_{m/n,k} \sum_{j=j_{m/n,k}}^{\infty}  \frac{2^{jm/n}}{\Lambda_k(2^{j})} \cdot \min\left\{\frac{|f(\mathbf{x})-f(\mathbf{y})|}{2^j}, 1\right\}\\
        & \geqslant A_{m/n,k} \sum_{j=j_{m/n,k}}^{J_f} \left( \frac{2^{jm/n}}{\Lambda_k(2^{j})} \cdot \sum_{\{\mathbf{x,y}\} \in E_{N,n}}  \min\left\{\frac{|f(\mathbf{x})-f(\mathbf{y})|}{2^j}, 1\right\} \right).
    \end{align*}
    where $A_{m/n,k}>0$ is the constant given in Lemma \ref{Lemma_elementary_lemma_lower_bound_of_varpi_p} and $$J_f:= \left \lfloor \frac{1}{m} \left(-1+ \log_2{|f(V_{N,n})|} \right) \right\rfloor \geqslant J_* \geqslant j_{m/n,k}.$$
    Note that for all $j_{m/n,k} \leqslant j \leqslant J_f$, we have $ 2 \cdot 2^{jm} \leqslant 2 \cdot  2^{J_f m} \leqslant |f(V_{N,n})|$.
    By applying Lemma \ref{Lemma_double-counting_of_expectation_of_number_of_bridges} to the above inequality with $K=2^{j}$ ($j_{m/n,k} \leqslant j \leqslant J_f$), we see
    \begin{align} \label{Equation_1}
        \sum_{\{\mathbf{x,y}\} \in E_{N,n}} \varpi^{(k)}_{m/n} \left( |f(\mathbf{x})-f(\mathbf{y})| \right)
        & \geqslant A_{m/n,k} \sum_{j=j_{m/n,k}}^{J_f} \left( \frac{2^{jm/n}}{\Lambda_k(2^{j})} \cdot \frac{1}{4}C_n  2^{-jm/n} |f(V_{N,n})| \right)\\
      \notag  & = \frac{1}{4}C_n A_{m/n,k} \cdot |f(V_{N,n})| \cdot \sum_{j=j_{m/n,k}}^{J_f}  \frac{1}{\Lambda_k(2^{j})}. 
    \end{align}
    So it remains to estimate $\sum_{j=j_{m/n,k}}^{J_f}  \frac{1}{\Lambda_k(2^{j})}$.
    We first note that for each $J_f \geqslant J_*$ and $j_{m/n,k} \leqslant j \leqslant J_f$, we have $$\log^{(i)}(2^j)= \log^{(i-1)}(\log2 \cdot j) \leqslant \log^{(i-1)}(j), \quad \forall 1 \leqslant i \leqslant k,$$
    with the convention $\log^{(0)}(t)=t$.
    Hence we have $$\Lambda_k(2^j) = \prod_{i=1}^k \log^{(i)}(2^j) \leqslant \prod_{i=1}^k \log^{(i-1)}(j)= j \cdot \Lambda_{k-1}(j),$$
    with the convention $\Lambda_0(t)=1$.
    So for each $J_f \geqslant J_*$ we have 
    \begin{equation} \label{Equation_2}
        \sum_{j=j_{m/n,k}}^{J_f}  \frac{1}{\Lambda_k(2^{j})} \geqslant \sum_{j=j_{m/n,k}}^{J_f} \frac{1}{\Lambda_{k-1}(j)} \geqslant \int_{j_{m/n,k}}^{J_f+1}\frac{1}{\Lambda_{k-1}(t)} \mathrm{d}t = \log^{(k)}(J_f+1)- \log^{(k)} (j_{m/n,k}).
    \end{equation}
    Note that if $\left|f(V_{N,n})\right|$ is sufficiently large, we have 
    \begin{equation} \label{Equation_3}
    \log^{(k)}(J_f+1) \geqslant \log^{(k)} {\left(  \frac{1}{m} \left(-1+ \log_2{|f(V_{N,n})|} \right) \right) } \geqslant \frac{1}{2} \log^{(k+1)} {\left| f(V_{N,n}) \right|}.
    \end{equation}

    We are done in view of (\ref{Equation_1}), (\ref{Equation_2}), and (\ref{Equation_3}).
\end{proof}

\section{Proof of Theorem \ref{Theorem_main_result_measure_equivalence_version}: absence of $\varpi^{(k)}_{m/n}$-integrability}

This section aims to complete the proof of Theorem \ref{Theorem_main_result_measure_equivalence_version}.
To begin with, let $\mathcal{C}=(X,X_G,X_H, \mu )$ be a measure equivalence coupling from $G$ to $H$, where $G,H$ are two countable (and finitely generated) groups.
After scaling, we will assume $\mu(X_H)=1$.
Recall that $\kappa^{\mathcal{C}}: G \times X_{H} \to H$ is the cocycle with respect to $\mathcal{C}$.
For $x \in X_{H}$, we define $\kappa_{x}^{\mathcal{C}}: G \to H$ by $\kappa_x^{\mathcal{C}} (g):=\kappa^{\mathcal{C}}(g,x)$.

Roughly speaking, Lemma \ref{Lemma_Estimating_sum_over_edges} says that the $\varpi^{(k)}_{\alpha}$-sum (over edges) for a function with large image is always large.
The next lemma says that for $x$ in a subset of $X_H$ of positive measure, the map $\kappa^{\mathcal{C}}_x$ has large image.

\begin{lemma} \label{Lemma_many_x_has_large_image}
    Let $\mathcal{C}=(X,X_G,X_H, \mu )$ be a measure equivalence coupling from $G$ to $H$.
    There is a constant $0<\rho_{\mathcal{C}}<1$ depending only on the coupling $\mathcal{C}$ such that for every finite set $F \subset G$, we have
        $$\mu \left( \left\{x \in X_H:  \left| \kappa^{\mathcal{C}}_x (F) \right| \geqslant \rho_{\mathcal{C}} |F| \right\}\right) \geqslant \rho_{\mathcal{C}}.$$
\end{lemma}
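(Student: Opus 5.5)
We bound, for each $x\in X_H$, the fibres of $\kappa^{\mathcal{C}}_x$ by a counting function, and then show this counting function is large only on a small set.

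For $x\in X_H$ and $h\in H$, the fibre $(\kappa^{\mathcal{C}}_x)^{-1}(h)$ consists of those $g\in G$ with $\kappa^{\mathcal{C}}(g,x)\,gx = g\star x$. The condition says $gx\in h^{-1}X_H$, i.e. $g\in\{g: gx\in h^{-1}X_H\}$. We therefore introduce
$$N(y):=\left|\{g\in G: gy\in X_H\}\right|, \qquad y\in X.$$
The fibre over $h$ then has size $N(hx)$, since $gx\in h^{-1}X_H$ exactly when $g(hx)\in X_H$ by commutativity. The function $N$ is $H$-invariant up to the relabelling just used. We first check that $N$ is integrable. By unfolding along the $G$-fundamental domain,
$$\int_{X_G} N\,\mathrm{d}\mu=\sum_{g\in G}\mu(X_G\cap g^{-1}X_H)=\mu(X_H)=1.$$
Since $N$ is $G$-invariant, it descends to a function on $X_G$ with integral $1$. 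In particular $N<\infty$ almost everywhere, and for $L$ large the set $\{N> L\}$ has small measure. The exact quantity we need, however, is the $\mu|_{X_H}$-measure of points whose \emph{orbit} data is bad.

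The cleanest route is a double count of $\sum_{g\in F}\mu(\{x\in X_H:\ |(\kappa_x)^{-1}(\kappa_x(g))|>L\})$. For fixed $g$, write $\kappa_x(g)=h$. The fibre containing $g$ has size $N(gx)$, because $h\,gx\in X_H$ so that the fibre is $\{g': g'gx\in X_H\}$ reindexed. The map $x\mapsto g\star x$ is measure preserving on $X_H$, and $g\star x$ lies in the $G$-orbit of $h\,gx$, so $N(gx)=N(g\star x)$. Hence
$$\mu(\{x: N(gx)>L\})=\mu(\{z\in X_H: N(z)>L\})=:\delta_L,$$
and $\delta_L\to0$ as $L\to\infty$ because $N<\infty$ almost everywhere on $X_H$.

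Choose $L$ with $\delta_L\le 1/4$. Let $B_x=\{g\in F: \text{the fibre of }g\text{ has size}>L\}$. Then $\int_{X_H}|B_x|\,\mathrm{d}\mu=|F|\delta_L\le |F|/4$, so by Markov $|B_x|\le |F|/2$ on a set of measure at least $1/2$. On that set, $F\setminus B_x$ has at least $|F|/2$ elements lying in fibres of size at most $L$. It follows that
$$|\kappa_x(F)|\ge |F|/(2L).$$
Taking $\rho_{\mathcal{C}}=\min\{1/2,\,1/(2L)\}$ gives the statement, with $\rho_{\mathcal{C}}$ independent of $F$.

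The main obstacle is the identification $N(gx)=N(g\star x)$ together with the measure-preservation of the induced action. Both rest on the commuting actions and on the freeness of the $H$-action.
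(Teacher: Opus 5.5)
\textbf{The identity you flag as the main obstacle is false as stated.} Your function $N(y)=|\{g\in G: gy\in X_H\}|$ is $G$-invariant (reindex $g\mapsto gg'$) but not $H$-invariant. Hence $N(gx)=N(x)$, whereas $N(g\star x)=N(\kappa_x(g)\,x)$, and these differ in general.

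For example, let $G=H=\mathbb{Z}$ act on $\mathbb{R}$ by translation by $1$ and by $\sqrt2$ respectively, with $X_G=[0,1)$ and $X_H=[0,\sqrt2)$. Then $N(y)=2$ if the fractional part of $y$ is less than $\sqrt2-1$, and $N(y)=1$ otherwise. For $x=0.1$ and $g=2$ one gets $\kappa_x(g)=-1$, $N(gx)=2$ and $N(g\star x)=1$, while the fibre of $\kappa_x$ through $g$ is $\{2\}$. So the sentence ``the fibre containing $g$ has size $N(gx)$'' is wrong, and so is the remark that $N$ is $H$-invariant up to relabelling.

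\textbf{The fix uses only what you wrote earlier.} With $h=\kappa_x(g)$, the fibre through $g$ is $\{g'\in G: g'(hx)\in X_H\}$, of size $N(hx)$. Since $g\star x=hgx=g(hx)$, $G$-invariance gives $N(hx)=N(g\star x)$. Then $\mu(\{x\in X_H: N(g\star x)>L\})=\delta_L$ by $\star$-invariance of $\mu|_{X_H}$. Your Markov step and the bound $|\kappa_x(F)|\geqslant|F|/(2L)$ then go through verbatim. Finiteness of $N$ a.e.\ on $X_H$ follows from $\int_{X_G}N\,\mathrm{d}\mu=1$, $G$-invariance of $N$, and $X=\bigsqcup_{g}gX_G$.

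\textbf{Comparison with the paper.} With this correction your proof is a genuinely different, and somewhat more intrinsic, route. The paper proceeds as follows:
\begin{enumerate}
\item It picks $r_0$ with $X_0=X_H\cap B_G(r_0)X_G$ of positive measure.
\item Using that $G(hx)\cap X_G$ is a single point, it shows $\kappa_x$ is at most $|B_G(r_0)|$-to-one on the return-time set $\{g: g\star x\in X_0\}$.
\item Integrating with $\star$-invariance gives $\int_{X_H}|\kappa_x(F)|\,\mathrm{d}\mu\geqslant\mu(X_0)|F|/|B_G(r_0)|$.
\item A reverse Markov argument using $|\kappa_x(F)|\leqslant|F|$ concludes.
\end{enumerate}
You instead bound entire fibres by the orbit-counting function $N(y)=|Gy\cap X_H|$. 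Its a.e.\ finiteness comes from the unfolding identity $\int_{X_G}N\,\mathrm{d}\mu=\mu(X_H)$, and you control the bad times $\{g\in F: N(g\star x)>L\}$ by a direct Markov bound.

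The mechanism is the same in both: $\star$-invariance produces a set of times of definite density on which $\kappa_x$ is uniformly finite-to-one. Your version needs no word metric or exhaustion by balls, and it bounds the full fibre in $G$ rather than its trace on a return-time set. The paper's version never has to express fibre sizes through an auxiliary function, which is exactly where your slip occurred.
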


\begin{proof}
    For an integer $r>0$, we will denote by $B_G(r)$ the ball of radius $r$ (with respect to word-length metric on $G$), centered at the identity element $e_G$.
    Since $X_G$ is a fundamental domain, we see $$X= \bigcup_{r=1}^{\infty} B_G(r) X_G, \quad \text{(modulo a null set)} $$
     where $B_G(r) X_G := \{ gx: g \in B_G(r), x \in X_G\}$.
    So there is $r_0 \geqslant 1$ such that $$\mu (X_H \cap B_{G}(r_0)X_G)> 0.$$
    We will write $X_0:=X_H \cap B_{G}(r_0)X_G$.
    For each $x \in X_H$, the hitting time set of $x$ to $X_0$ under the induced action $\star$ by $G$ is defined to be $$R_{X_0, \star} (x):=\{ g \in G : g \star x \in X_0\} \subset G.$$

    We first claim that for $\mu|_{X_H}$-a.e. $x \in X_H$, the map $\kappa^{\mathcal{C}}_x$ is at most $|B_G(r_0)|$-to-one on the set $R_{X_0, \star}(x)$; i.e., for $\mu|_{X_H}$-a.e. $x \in X_H$ and each $h \in H$, we have 
    \begin{equation} \label{Equation_finite_to_one_on_the_return_time_set}
        \left| \left( \kappa^{\mathcal{C}}_{x}\right)^{-1}(h) \cap R_{X_0, \star} (x) \right| \leqslant |B_{G}(r_0)|.
    \end{equation}
    To see the above inequality, we take $g \in \left( \kappa^{\mathcal{C}}_{x}\right)^{-1}(h) \cap R_{X_0, \star} (x)$ for any $h \in H$ and a \emph{good} $x \in X_H$ (to be specified later).
    By definition, we have $hgx= g \star x \in X_0= X_H \cap B_G(r_0) X_G $ since $g \in R_{X_0, \star}(x)$.
    So there is $\hat{g} \in B_G(r_0)$ such that $\hat{g}^{-1}hgx \in X_G$; i.e., $(\hat{g}^{-1}g)(hx) \in X_G$.
    Since $X_G$ is a fundamental domain, we see $$\{(\hat{g}^{-1}g)(hx)\}= G(hx) \cap X_G.$$
    Note that $G$-action on $X$ is free, which implies that $\hat{g}= g\beta^{-1}$, where $\beta:=\beta(h,x)$ is the unique element in $G$ such that $\beta hx \in X_G$.
    This means that $g \mapsto \hat{g}$ is an injection from $\left( \kappa^{\mathcal{C}}_{x}\right)^{-1}(h) \cap R_{X_0, \star} (x)$ to $B_{G}(r_0)$.
    So we obtain the inequality (\ref{Equation_finite_to_one_on_the_return_time_set}).
    We now explain \emph{goodness} for $x \in X_H$.
    Indeed, we say $x \in X_H$ is good if for all $h \in H$, the map $g \mapsto gx$ is a bijection from $G$ to $Gx$ and $|X_G \cap G(hx)|=1$.
    Such good points form a subset of $X_H$ of full measure since the action $G \curvearrowright X$ is free and $X_G$ is a fundamental domain of this action.
    So (\ref{Equation_finite_to_one_on_the_return_time_set}) holds for all $h \in H$ and $\mu|_{X_H}$-a.e. $x \in X_H$.

    Now for any non-empty finite set $F \subset G$ and $\mu |_{X_H}$-a.e. $x \in X_H$, by (\ref{Equation_finite_to_one_on_the_return_time_set}) we have 
    \begin{align*}
        |F \cap R_{X_0,\star}(x)| & \leqslant \sum_{h \in \kappa^{\mathcal{C}}_x(F)} \left| \left( \kappa^{\mathcal{C}}_{x}\right)^{-1}(h) \cap R_{X_0, \star} (x) \right|  \leqslant \sum_{h \in \kappa^{\mathcal{C}}_x(F)} |B_{G}(r_0)| = |B_{G}(r_0)| \cdot \left| \kappa^{\mathcal{C}}_x(F) \right|.
    \end{align*}
    Consequently, we have 
    \begin{align*}
        \int_{X_H} \left|\kappa^{\mathcal{C}}_x (F)\right| \mathrm{d}\mu(x) 
        &  \geqslant \frac{1}{|B_{G}(r_0)|} \int_{X_H} \left|F \cap R_{X_0,\star}(x)\right|  \mathrm{d}\mu(x) \\
        & =\frac{1}{|B_{G}(r_0)|} \sum_{g \in F} \mu \left( \left\{  x \in X_H: g \star x \in X_0   \right\}\right)\\
        & =\frac{1}{|B_{G}(r_0)|} \sum_{g \in F} \mu( g^{-1} \star X_0) \\
       \text{(since the action $\star$ preserves $\mu$) } &   = \frac{\mu(X_0)}{|B_{G}(r_0)|}\cdot |F|.
    \end{align*}
    Recall that $0< \mu(X_0) \leqslant \mu(X_H)=1$.
    So we can take $$\rho_{\mathcal{C}}:= \frac{ \mu(X_0)}{ 2 |B_{G}(r_0)|} \in (0,1).$$
    Namely, we have $\int_{X_H} \left|\kappa^{\mathcal{C}}_x (F)\right| \mathrm{d}\mu(x) \geqslant 2 \rho_{\mathcal{C}} |F| $.
    Consequently, we see
    \begin{align*}
        \mu \left( \left\{x \in X_H:  \left| \kappa^{\mathcal{C}}_x (F) \right| \geqslant \rho_{\mathcal{C}} |F| \right\} \right)
        & \geqslant  \int_{\left\{x \in X_H:  \left| \kappa^{\mathcal{C}}_x (F) \right| \geqslant \rho_{\mathcal{C}} |F| \right\}} \frac{\left| \kappa^{\mathcal{C}}_x (F) \right|}{|F|} \mathrm{d}\mu \\
        & =  \int_{X_H} \frac{\left|\kappa^{\mathcal{C}}_x (F)\right|}{|F|} \mathrm{d}\mu(x)- \int_{\left\{x \in X_H:  \left| \kappa^{\mathcal{C}}_x (F) \right| < \rho_{\mathcal{C}} |F| \right\}} \frac{\left| \kappa^{\mathcal{C}}_x (F) \right|}{|F|} \mathrm{d}\mu \\
        & \geqslant 2 \rho_{\mathcal{C}}- \int_{\left\{x \in X_H:  \left| \kappa^{\mathcal{C}}_x (F) \right| < \rho_{\mathcal{C}} |F| \right\}}  \rho_{\mathcal{C}} \mathrm{d}\mu \\
        & \geqslant \rho_{\mathcal{C}}.
    \end{align*}
This completes the proof (since when $F= \varnothing$
 the assertion is trivial).
\end{proof}

Now we are ready to prove the main result.

\begin{proof}[Proof of Theorem \ref{Theorem_main_result_measure_equivalence_version}]
We assume $k \geqslant 1$ since the assertion for larger $k$ implies the assertion for smaller $k$.
Let $n>m$ be two positive integers and $\mathcal{C}=(X, X_{\mathbb{Z}^n}, X_{\mathbb{Z}^m}, \mu)$ be a measure equivalence from $\mathbb{Z}^n$ to $\mathbb{Z}^m$.
Our aim is to show $\mathcal{C}$ is not $\varpi^{(k)}_{m/n}$-integrable.
As above, we assume $\mu(X_{\mathbb{Z}^m})=1$ (otherwise, scale $\mu$ by a constant).

By Lemma \ref{Lemma_many_x_has_large_image}, we see there is a constant $\rho_{\mathcal{C}} \in (0,1)$ such that for each integer $N \geqslant 1$ there is a subset $Y_N \subset X_{\mathbb{Z}^m}$ satisfying $\mu(Y_N) \geqslant \rho_{\mathcal{C}}$ and 
\begin{equation} \label{Equation_largeness_of_image_in_the_proof_of_the_main_result}
\left| \kappa^{\mathcal{C}}_x (V_{N,n})  \right| \geqslant \rho_{\mathcal{C}} |V_{N,n}| =  \rho_{\mathcal{C}} N^n, \quad \forall x \in Y_N.    
\end{equation}

Now let $c_{n,m,k}>0$ and $M_{n,m,k}>0$ be the constants given in Lemma \ref{Lemma_Estimating_sum_over_edges}.
Now for sufficiently large $N$ (such that $\rho_{\mathcal{C}} N^n \geqslant M_{n,m,k}$), we see $\left| \kappa^{\mathcal{C}}_x (V_{N,n})  \right| \geqslant  M_{n,m,k}$.
By the inequality (\ref{Equation_general_estimate_for_injective_maps}), we have
\begin{equation} \label{Equation_estimate_for_map_induced_by_cocycle}
        \sum_{\{\mathbf{u,v}\} \in E_{N,n}} \varpi^{(k)}_{m/n} \left( \left|\kappa^{\mathcal{C}}_x(\mathbf{u})-\kappa^{\mathcal{C}}_x(\mathbf{v}) \right| \right) \geqslant c_{n,m,k} \cdot \left|\kappa^{\mathcal{C}}_x(V_{N,n}) \right| \cdot \log^{(k+1)} {\left|\kappa^{\mathcal{C}}_x(V_{N,n})\right|}, \quad \forall x \in Y_N.
    \end{equation}
Since $\{\mathbf{u,v}\} \in E_{N,n}$, in view of the cocycle identity (\ref{Equation_cocycle_identity_for_measure_equivalence_cocyle}), we have 
$$\kappa^{\mathcal{C}}_x(\mathbf{u})-\kappa^{\mathcal{C}}_x(\mathbf{v}) = \kappa^{\mathcal{C}}_{\mathbf{v} \star x}(\mathbf{\mathbf{u-v}}) = \kappa^{\mathcal{C}}_{\mathbf{v} \star x}(\mathbf{e}_j),$$
for some $1 \leqslant j \leqslant n$.
So the inequality (\ref{Equation_estimate_for_map_induced_by_cocycle}) is nothing but $$\sum_{j=1}^n \sum_{  \substack{ \mathbf{v} \in V_{N,n}; \\ \mathbf{v}+\mathbf{e}_j \in V_{N,n}}} \varpi^{(k)}_{m/n} \left( \left|\kappa^{\mathcal{C}}_{\mathbf{v} \star x}(\mathbf{e}_j) \right| \right) \geqslant c_{n,m,k} \cdot \left|\kappa^{\mathcal{C}}_x(V_{N,n}) \right| \cdot \log^{(k+1)}{\left|\kappa^{\mathcal{C}}_x(V_{N,n})\right|}, \quad \forall x \in Y_N. $$
This together with (\ref{Equation_largeness_of_image_in_the_proof_of_the_main_result}) implies that for all sufficiently large $N$, we have
$$\sum_{j=1}^n \sum_{  \substack{ \mathbf{v} \in V_{N,n}; \\ \mathbf{v}+\mathbf{e}_j \in V_{N,n}}} \varpi^{(k)}_{m/n} \left( \left|\kappa^{\mathcal{C}}_{\mathbf{v} \star x}(\mathbf{e}_j) \right| \right) \geqslant c_{n,m,k}  \rho_{\mathcal{C}} \cdot N^n \log^{(k+1)} {(\rho_{\mathcal{C}} N^n)}, \quad \forall x \in Y_N. $$
Taking integration in the above yields
\begin{equation} \label{Equation_almost_done}
    \sum_{j=1}^n \sum_{  \substack{ \mathbf{v} \in V_{N,n}; \\ \mathbf{v}+\mathbf{e}_j \in V_{N,n}}} \int_{X_{\mathbb{Z}^m}} \varpi^{(k)}_{m/n} \left( \left|\kappa^{\mathcal{C}}_{\mathbf{v} \star x}(\mathbf{e}_j) \right| \right) \mathrm{d}\mu(x) 
    \geqslant  c_{n,m,k} \rho_{\mathcal{C}}^2 \cdot  N^n \log^{(k+1)} {(\rho_{\mathcal{C}} N^n)}.
\end{equation}

Note that the induced action $\star$ preserves measure $\mu|_{X_{\mathbb{Z}^m}}$ and hence we have $$\int_{X_{\mathbb{Z}^m}} \varpi^{(k)}_{m/n} \left( \left|\kappa^{\mathcal{C}}_{\mathbf{v} \star x}(\mathbf{e}_j) \right| \right) \mathrm{d}\mu(x)=\int_{X_{\mathbb{Z}^m}} \varpi^{(k)}_{m/n} \left( \left|\kappa^{\mathcal{C}}_{x}(\mathbf{e}_j) \right| \right) \mathrm{d}\mu(x), \quad \forall \mathbf{v} \in V_{N,n}.$$
Moreover, for each $1 \leqslant j \leqslant n$, we have $$ \sum_{  \substack{  \mathbf{v} \in V_{N,n}, \\ \mathbf{v}+\mathbf{e}_j \in V_{N,n}}} 1  \leqslant 2 \cdot |V_{N,n}|=2 N^n.$$
It follows by (\ref{Equation_almost_done}) that, for all sufficiently large $N$, we have
$$\sum_{j=1}^n \int_{X_{\mathbb{Z}^m}} \varpi^{(k)}_{m/n} \left( \left|\kappa^{\mathcal{C}}_{x}(\mathbf{e}_j) \right| \right) \mathrm{d}\mu(x) 
    \geqslant \frac{1}{2} c_{n,m,k} \rho_{\mathcal{C}}^2 \cdot \log^{(k+1)} {(\rho_{\mathcal{C}} N^n)}.$$
Taking $N \to +\infty$ yields $$\sum_{j=1}^n \int_{X_{\mathbb{Z}^m}} \varpi^{(k)}_{m/n} \left( \left|\kappa^{\mathcal{C}}_{x}(\mathbf{e}_j) \right| \right) \mathrm{d}\mu(x)=\infty. $$
Hence there is $1 \leqslant j_0 \leqslant n$ such that $$\int_{X_{\mathbb{Z}^m}} \varpi^{(k)}_{m/n} \left( \left|\kappa^{\mathcal{C}}(\mathbf{e}_{j_0},x) \right| \right) \mathrm{d}\mu(x)=\int_{X_{\mathbb{Z}^m}} \varpi^{(k)}_{m/n} \left( \left|\kappa^{\mathcal{C}}_{x}(\mathbf{e}_{j_0}) \right| \right) \mathrm{d}\mu(x)=\infty.$$
This means that $\mathcal{C}$ is not $\varpi^{(k)}_{m/n}$-integrable, in view of Proposition \ref{Proposition_the_same_integrability_up_to_constant}.
\end{proof}

\section{$\left(\varpi^{(k)}_{m/n, \epsilon}, \varpi^{(k)}_{n/m, \epsilon}\right)$-orbit equivalence for $p$-adic adding machines}

\subsection{$p$-Adic numbers and adding machines}
Here we briefly introduce some basic fact about $p$-adic numbers and $p$-adic adding machines.
We refer the reader to the classical monograph \cite{Applied_algebraic_dynamics} by Anashin and Khrennikov for more details.

Let $p$ be a prime number. For a nonzero integer $n \in \mathbb{Z}$, denote by $v_p(n)$ the largest nonnegative integer $k$ such that $p^k$ divides $n$.
The $p$-\emph{adic absolute value} on $\mathbb{Z}$ is defined by $|n|_p=p^{-v_p(n)}$ for $n \neq 0$ and $|0|_p = 0$.
It is easy to check that
$$|x+y|_p \leqslant \max\{|x|_p,|y|_p\}, \quad \forall x,y \in \mathbb{Z}.$$
Hence $|\cdot|_p$ induces a metric on $\mathbb{Z}$, called the $p$-\emph{adic metric}.

The \emph{ring of $p$-adic integers}, denoted by
$\mathbb{Z}_p$, is the completion of $\mathbb{Z}$ under the $p$-adic metric.
Another equivalent definition in terms of inverse limit says that, as a topological ring, we have $$\mathbb{Z}_p = \varprojlim_{n} \mathbb{Z}/ p^n \mathbb{Z}.$$ 
Note that every element of $\mathbb{Z}_p$ admits a unique expansion $$z=\sum_{i \geqslant 0} a_ip^i,     \qquad a_i \in \{0,1, \dots ,p-1\}= \mathbb{Z}/p\mathbb{Z}, \  \forall i \geqslant 0.$$
This naturally defines, for each $i \geqslant 0$, the ($p$-\emph{adic}) \emph{$i$-th digit map} $\delta_i : \mathbb{Z}_p \to \mathbb{Z}/ p \mathbb{Z}$ with $\delta_i(z)= a_i$.
Note that $z \in \mathbb{Z}_p$ is a non-negative (rational) integer if and only if $z$ is finitely supported; i.e., $\delta_i(z) \neq 0$ for only finitely many $i \geqslant 0$; and $z \in \mathbb{Z}_p$ is a negative (rational) integer if and only if $\delta_i(z) \neq p-1$ for only finitely many $i \geqslant 0$.
We will denote by $\mathcal{Z}_p : =\mathbb{Z}_p \setminus \mathbb{Z}$ the $p$-adic integers which are not (rational) integers.

Now let $k \geqslant l \geqslant 0$ and we put $\delta_{[l,k]} : \mathbb{Z}_p \to (\mathbb{Z} / p \mathbb{Z})^{k-l+1}$ with $\delta_{[l,k]}(z)=(\delta_{l}(z),\delta_{l+1}(z), \cdots, \delta_{k}(z))$.
The \emph{Haar measure} on $\mathbb{Z}_p$, denoted by $\mu_p$, is uniquely determined by its values on cylinder sets: $$\mu_p ( \delta_{[l,k]}^{-1} (\overline{a}) ):= p^{-(k-l+1)}, \quad \forall \overline{a} \in (\mathbb{Z} / p \mathbb{Z})^{k-l+1}.$$
Note that $\mu_p$ is atom-free, which gives $\mu_p(\mathcal{Z}_p)=1$.

For $d \geqslant 1$, the product metric on $\mathbb{Z}_p^d$ is defined by $$\mathrm{dist}(\mathbf{z}, \mathbf{w})= \sum_{i=1}^{d} \mathrm{dist}(z_i, w_i)= \sum_{i=1}^{d} |z_i-w_i|_p,$$
where $\mathbf{z}=(z_1, z_2, \cdots, z_d), \mathbf{w}=(w_1, w_2, \cdots, w_d) \in \mathbb{Z}_p^d$.
We will denote by $\mu^{d}_p$ the Haar measure on $\mathbb{Z}_p^d$, which is the product measure for $d$ copies of $\mu_p$.
There is a bijection between $\mathbb{Z}_p^d$ and $\mathbb{Z}_p$ (introduced in \cite{Applied_algebraic_dynamics}, pp. 124-125) which will be used later to construct orbit equivalences with prescribed quantitative restrictions.
For $\mathbf{z}=(z_1, z_2, \cdots,z_d) \in \mathbb{Z}_p^d$, we let $B_d(\mathbf{z}) \in \mathbb{Z}_p$ such that $$\delta_{jd+r}(B_d(\mathbf{z}))= \delta_j(z_{r+1}), \quad \forall j \geqslant 0, \ r \in \{0,1, \cdots, d-1\}.$$
Intuitively, $B_d$ reads the digits of $z_1, z_2, \cdots,z_d$ in order.
It is easy to see that $B_d : \mathbb{Z}_p^d \to \mathbb{Z}_p$ is a homeomorphism and a measure isomorphism (w.r.t. the Haar measures). 

The ($d$\emph{-dimensional}) $p$-\emph{adic adding machine} is the action $\mathbb{Z}^d \curvearrowright \mathbb{Z}_p^d$ with $ \mathbf{n}  \mathbf{z}:= \mathbf{n+z}$ for each $\mathbf{n} \in \mathbb{Z}^d$ and $\mathbf{z} \in \mathbb{Z}_p^d$.
Note that this action preserves $\mu^{d}_p$ and is free (w.r.t. $\mu^{d}_p$).
Moreover, it is (topologically conjugate to) a $\mathbb{Z}^d$-odometer.

Two points $z,w \in \mathbb{Z}_p$ are said to be \emph{cofinal} if $\delta_i(z) \neq \delta_i(w)$ for only finitely many $i \geqslant 0$.
We say $\mathbf{z,w} \in \mathbb{Z}_p^d$ are \emph{cofinal} if all the pairs of the corresponding coordinates are cofinal.
We list two basic facts here which, we believe, can be easily verified by the reader.

\begin{lemma} \label{Lemma_cofinal_and_orbit_partition_in_p_adic_adding_machine}
    In the $d$-dimensional $p$-adic adding machine, $\mathcal{Z}_p^d$ is a disjoint union of orbits and $\mathbf{z,w} \in \mathcal{Z}_p^d$ lie in the same orbit if and only if they are cofinal.
\end{lemma}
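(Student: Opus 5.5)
The plan is to prove the two assertions of Lemma \ref{Lemma_cofinal_and_orbit_partition_in_p_adic_adding_machine} separately, first for $d=1$ and then coordinatewise. The basic tool is a carry-propagation analysis: adding a rational integer to a $p$-adic integer changes only finitely many digits, except when the carry runs through an infinite tail of digits $p-1$ (for positive increments) or $0$ (for negative ones).

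\emph{Invariance of $\mathcal{Z}_p^d$.} The orbit of $\mathbf{z}$ is $\mathbf{z}+\mathbb{Z}^d$. Since $\mathbb{Z}$ is a subgroup of $\mathbb{Z}_p$, we have $z+n\in\mathbb{Z}$ if and only if $z\in\mathbb{Z}$. Hence $\mathcal{Z}_p$ is invariant under translation by $\mathbb{Z}$, and $\mathcal{Z}_p^d$ is invariant under translation by $\mathbb{Z}^d$. Any invariant set is a disjoint union of orbits, so the first assertion follows.

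\emph{Same orbit implies cofinal.} Let $z\in\mathcal{Z}_p$ and $n\in\mathbb{Z}$. We show $z+n$ is cofinal with $z$. By iteration it suffices to treat $n=\pm1$, because cofinality is an equivalence relation.
\begin{itemize}
\item For $n=1$: since $z\notin\mathbb{Z}$, the digits of $z$ are not eventually all $p-1$; otherwise $z$ would be a negative integer. Let $i_0$ be the first index with $\delta_{i_0}(z)\neq p-1$. Then $z+1$ has digits $0$ below $i_0$, the digit $\delta_{i_0}(z)+1$ at $i_0$, and agrees with $z$ above $i_0$.
\item For $n=-1$: the digits of $z$ are not eventually all $0$, since $z$ is not a non-negative integer. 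The symmetric borrow argument applies.
\end{itemize}
Applying this in each coordinate shows that $\mathbf{z}$ and $\mathbf{z}+\mathbf{n}$ are cofinal.

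\emph{Cofinal implies same orbit.} Suppose $z,w\in\mathcal{Z}_p$ are cofinal, and let $K$ be such that $\delta_i(z)=\delta_i(w)$ for all $i\geqslant K$. Then
$$z-w=\sum_{i<K}\bigl(\delta_i(z)-\delta_i(w)\bigr)p^i,$$
because the tails cancel termwise as convergent series in $\mathbb{Z}_p$. This finite sum is a rational integer $n$, so $z=w+n$. Doing this coordinatewise gives $\mathbf{z}=\mathbf{w}+\mathbf{n}$ with $\mathbf{n}\in\mathbb{Z}^d$.

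The only delicate point is the carry argument in the second step. There the hypothesis $z\notin\mathbb{Z}$ is exactly what rules out an infinite carry. Indeed, for rational integers the lemma fails: $-1$ and $0$ lie in the same orbit but are not cofinal. This is why the statement is restricted to $\mathcal{Z}_p^d$.
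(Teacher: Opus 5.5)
Your argument is correct. The invariance of $\mathcal{Z}_p^d$, the one-step carry/borrow analysis, and the termwise cancellation of tails for the converse all check out; the iteration is legitimate because your first step keeps every intermediate point in $\mathcal{Z}_p$.

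The paper gives no proof of this lemma and leaves it to the reader. Your verification is the natural one, and it is the same digit analysis the paper later uses in the proof of Theorem~\ref{Theorem_quantitative_orbit_equivalence_for_p_adic_adding_machines}: there $\ell_j(\mathbf{x})$ plays the role of your index $i_0$, and its finiteness is justified by $-1\notin\mathcal{Z}_p$.
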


\begin{lemma} \label{Lemma_B_preserves_cofinalness}
   Let $\mathbf{z,w} \in \mathbb{Z}_p^d$.
   Then $\mathbf{z,w}$ are cofinal if and only if $B_d(\mathbf{z})$ and $B_d(\mathbf{w})$ are cofinal.
\end{lemma}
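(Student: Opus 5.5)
The statement is Lemma \ref{Lemma_B_preserves_cofinalness}. The plan is to reduce it entirely to the digit bookkeeping built into $B_d$. By definition, the digits of $B_d(\mathbf{z})$ are the digits of $z_1,\dots,z_d$ interleaved. The map
$$\iota:\{1,\dots,d\}\times\mathbb{Z}_{\geqslant 0}\to\mathbb{Z}_{\geqslant 0},\qquad (r+1,j)\mapsto jd+r,$$
is a bijection, by division with remainder. With $\mathbf{w}=(w_1,\dots,w_d)$, the defining relation gives
$$\delta_{jd+r}(B_d(\mathbf{z}))\neq\delta_{jd+r}(B_d(\mathbf{w}))\iff \delta_j(z_{r+1})\neq\delta_j(w_{r+1}).$$

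Define the discrepancy sets
$$D(z,w):=\{i\geqslant 0:\delta_i(z)\neq\delta_i(w)\}\quad\text{for } z,w\in\mathbb{Z}_p.$$
The displayed equivalence says exactly that $\iota$ restricts to a bijection
$$\bigsqcup_{r=0}^{d-1}\{r+1\}\times D(z_{r+1},w_{r+1})\;\longrightarrow\; D\big(B_d(\mathbf{z}),B_d(\mathbf{w})\big).$$
Consequently
$$|D(B_d(\mathbf{z}),B_d(\mathbf{w}))|=\sum_{r=1}^{d}|D(z_r,w_r)|,$$
with the convention that $\infty$ absorbs everything.

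To conclude, note that $\mathbf{z},\mathbf{w}$ are cofinal precisely when every $D(z_r,w_r)$ is finite. A finite disjoint union of $d$ sets is finite if and only if each of the $d$ sets is finite. This is exactly the statement that $B_d(\mathbf{z})$ and $B_d(\mathbf{w})$ are cofinal.

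There is no real obstacle here. The only point needing care is checking that $\iota$ is a genuine bijection, so that no index of $B_d(\mathbf{z})$ is missed or double counted. This follows from the uniqueness of $i=jd+r$ with $0\leqslant r<d$.
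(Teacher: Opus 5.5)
Your argument is correct: the index map $(r+1,j)\mapsto jd+r$ is a bijection onto $\mathbb{Z}_{\geqslant 0}$, so the set of digits where $B_d(\mathbf{z})$ and $B_d(\mathbf{w})$ differ is in bijection with the disjoint union of the coordinatewise discrepancy sets. Since a finite union of $d$ sets is finite if and only if each set is finite, the equivalence follows. The paper gives no proof of this lemma; it lists it as a basic fact to be verified by the reader. Your digit-interleaving bookkeeping is exactly the intended verification.
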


\subsection{Proof of Theorem \ref{Theorem_quantitative_orbit_equivalence_for_p_adic_adding_machines}}

Let $p$ be a prime and $n,m$ be two positive integers.
Let $\mathbb{Z}^n \curvearrowright (\mathbb{Z}_p^n, \mu_p^n)$ and $\mathbb{Z}^m \curvearrowright (\mathbb{Z}_p^m, \mu_p^m)$ be two $p$-adic adding machines.
Consider $\Phi = \Phi_{n,m}:= B_{m}^{-1} \circ B_n : \mathbb{Z}_p^n \to \mathbb{Z}_p^m$, which is clearly a homeomorphism and a measure isomorphism.
Put $X:= \mathcal{Z}_p^n \cap \Phi^{-1}( \mathcal{Z}_p^m)$ and $Y:= \Phi(X)$.
We note that $\mu_p^n(X)=\mu_p^m(Y)=1$ since $\mu_p(\mathcal{Z}_p)= \mu_p(\mathbb{Z}_p \setminus \mathbb{Z})=1$.

\begin{lemma}
    With the notation above, we have $$\Phi(\mathbb{Z}^n \mathbf{x})=\mathbb{Z}^m \Phi(\mathbf{x}), \quad \forall \mathbf{x} \in X.$$
    Consequently, $\Phi$ is an orbit equivalence between the two $p$-adic adding machines.
\end{lemma}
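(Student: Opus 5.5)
The plan is to reduce the orbit statement to cofinality, using Lemma \ref{Lemma_cofinal_and_orbit_partition_in_p_adic_adding_machine} and Lemma \ref{Lemma_B_preserves_cofinalness} on both sides of $\Phi=B_m^{-1}\circ B_n$.

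First, I will check that $X$ is invariant under the $\mathbb{Z}^n$-action and $Y$ under the $\mathbb{Z}^m$-action. Take $\mathbf{x}\in X$ and $\mathbf{n}\in\mathbb{Z}^n$. The point $\mathbf{x}+\mathbf{n}$ lies in the same orbit as $\mathbf{x}$, so it stays in $\mathcal{Z}_p^n$ by Lemma \ref{Lemma_cofinal_and_orbit_partition_in_p_adic_adding_machine}. That lemma also gives that $\mathbf{x}+\mathbf{n}$ is cofinal with $\mathbf{x}$. Applying Lemma \ref{Lemma_B_preserves_cofinalness} in dimension $n$ and then in dimension $m$ (read in reverse), $\Phi(\mathbf{x}+\mathbf{n})$ is cofinal with $\Phi(\mathbf{x})\in\mathcal{Z}_p^m$.

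There is one small point to verify here: a point of $\mathbb{Z}_p^m$ cofinal with a point of $\mathcal{Z}_p^m$ again lies in $\mathcal{Z}_p^m$. This holds coordinatewise, because changing finitely many digits cannot turn a digit sequence that is neither eventually $0$ nor eventually $p-1$ into one that is. Hence $\mathbf{x}+\mathbf{n}\in X$, and $\Phi(\mathbf{x}+\mathbf{n})$ is cofinal with $\Phi(\mathbf{x})$ inside $\mathcal{Z}_p^m$. By Lemma \ref{Lemma_cofinal_and_orbit_partition_in_p_adic_adding_machine} for the $m$-dimensional machine, this gives $\Phi(\mathbb{Z}^n\mathbf{x})\subset\mathbb{Z}^m\Phi(\mathbf{x})$.

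For the reverse inclusion, take $\mathbf{y}=\Phi(\mathbf{x})+\mathbf{k}$ with $\mathbf{k}\in\mathbb{Z}^m$. Then $\mathbf{y}\in\mathcal{Z}_p^m$ and $\mathbf{y}$ is cofinal with $\Phi(\mathbf{x})$. Set $\mathbf{x}'=\Phi^{-1}(\mathbf{y})$. By Lemma \ref{Lemma_B_preserves_cofinalness} applied twice, $\mathbf{x}'$ is cofinal with $\mathbf{x}\in\mathcal{Z}_p^n$, so $\mathbf{x}'\in\mathcal{Z}_p^n$, and also $\Phi(\mathbf{x}')=\mathbf{y}\in\mathcal{Z}_p^m$; thus $\mathbf{x}'\in X$. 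Lemma \ref{Lemma_cofinal_and_orbit_partition_in_p_adic_adding_machine} then gives $\mathbf{x}'\in\mathbb{Z}^n\mathbf{x}$. This proves equality of the orbits.

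Since $\mu_p^n(X)=1$ and $\Phi$ is a measure isomorphism, the identity $\Phi(\mathbb{Z}^n\mathbf{x})=\mathbb{Z}^m\Phi(\mathbf{x})$ holding for every $\mathbf{x}\in X$ means it holds almost everywhere. So $\Phi$ is an orbit equivalence in the sense of the definition.

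The only step needing care is checking that the lemmas are applied to points in $\mathcal{Z}_p$ rather than in $\mathbb{Z}$. On $\mathbb{Z}$, cofinality and orbits disagree: for $p$-adic integers, $-1$ is cofinal with neither $0$ nor any positive integer, yet all of them lie in one orbit. This is why $X$ is defined to exclude integer coordinates on both sides, and the digit-level remark above handles it.
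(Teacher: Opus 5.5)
Your proposal is correct and follows the paper's proof: both inclusions come from Lemma \ref{Lemma_cofinal_and_orbit_partition_in_p_adic_adding_machine} together with Lemma \ref{Lemma_B_preserves_cofinalness} applied in dimensions $n$ and $m$ to $\Phi=B_m^{-1}\circ B_n$. You also make explicit a step the paper leaves implicit: a point cofinal with an element of $\mathcal{Z}_p^m$ (resp. $\mathcal{Z}_p^n$) again lies in $\mathcal{Z}_p^m$ (resp. $\mathcal{Z}_p^n$), and this is needed before Lemma \ref{Lemma_cofinal_and_orbit_partition_in_p_adic_adding_machine} can be invoked.
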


\begin{proof}
    Let $\mathbf{x} \in X \subset \mathcal{Z}_p^n$.
    On the one hand, for any $\mathbf{w} \in \mathbb{Z}^n  \mathbf{x}$, by Lemma \ref{Lemma_cofinal_and_orbit_partition_in_p_adic_adding_machine}, $\mathbf{x,w}$ are cofinal.
    Then by Lemma \ref{Lemma_B_preserves_cofinalness}, we see $\Phi(\mathbf{x}), \Phi(\mathbf{w})$ are cofinal.
    Since $\Phi(\mathbf{x}) \in \Phi(X)=Y \subset \mathcal{Z}_p^m$, we have $\Phi(\mathbf{w}) \in \mathbb{Z}^m \Phi(\mathbf{x})$.
    This gives $\Phi(\mathbb{Z}^n \mathbf{x}) \subset \mathbb{Z}^m \Phi(\mathbf{x})$.
    On the other hand, for $\mathbf{y} \in \mathbb{Z}^m \Phi(\mathbf{x})$, we see $\mathbf{y}$ and $\Phi(\mathbf{x})$ are cofinal, so are $\Phi^{-1}(\mathbf{y}), \mathbf{x}$.
    It follows that $ \Phi^{-1}(\mathbf{y}) \in \mathbb{Z}^n \mathbf{x}$ and thus $\Phi(\mathbb{Z}^n \mathbf{x}) \supset \mathbb{Z}^m \Phi(\mathbf{x})$.
\end{proof}

We then show that this orbit equivalence is as required.

\begin{proof}[Proof of Theorem \ref{Theorem_quantitative_orbit_equivalence_for_p_adic_adding_machines}]
With the notation above, for a fixed integer $1 \leqslant j \leqslant n$ and a point $\mathbf{x}=(x_1, x_2, \cdots,x_n) \in X$, we put $$\ell_j(\mathbf{x}):= \min\{k \geqslant 0: \delta_k(x_j) \neq p-1\}.$$
Note that $\ell_j: X \to \mathbb{Z}_{\geqslant 0}$ for all $1 \leqslant j \leqslant n$ since $-1 \notin \mathcal{Z}_p$.
Moreover, $\mathbf{e}_j \mathbf{x}$ and $\mathbf{x}$ differ only in the first $\ell_j(\mathbf{x})+1$ digits at most; namely, if we write $\mathbf{e}_j \mathbf{x}=(x_1', x_2', \cdots,x_n')$, then $\delta_k(x_i)=\delta_k(x_i')$ for all $1 \leqslant i \leqslant n$ and $k \geqslant \ell_j(\mathbf{x})+1.$
This then gives that $\Phi(\mathbf{e}_j\mathbf{x})$ and $\Phi(\mathbf{x})$ differ in the first $$\hat{\ell}_j(\mathbf{x}):= 1+ \left\lfloor \frac{n (1+\ell_j (\mathbf{x}))}{m}  \right \rfloor$$ digits at most.
Now let $\kappa^{\Phi}$ be the corresponding cocycle.
Then we have $$ \Phi(\mathbf{e}_j\mathbf{x}) =\kappa^{\Phi}(\mathbf{e}_j, \mathbf{x}) \Phi(\mathbf{x})= \kappa^{\Phi}(\mathbf{e}_j, \mathbf{x}) +\Phi(\mathbf{x}).$$
Writing $\Phi(\mathbf{x})=(y_1,y_2, \cdots, y_m)$ and $ \Phi(\mathbf{e}_j\mathbf{x})=(y_1', y_2', \cdots, y_m')$, we obtain 
\begin{equation} \label{Equation_upper_bound_of_cocycle_in_p_adic_adding_machines}
      |\kappa^{\Phi}(\mathbf{e}_j, \mathbf{x})| =  \sum_{i=1}^{m} \left| \sum_{k=0}^{\hat{\ell}_j(\mathbf{x})}p^k\left(\delta_k(y_i')-\delta_k(y_i)\right) \right|\leqslant \sum_{i=1}^{m} \sum_{k=0}^{\hat{\ell}_j(\mathbf{x})}p^k(p-1) \leqslant m \cdot p^{\hat{\ell}_j(\mathbf{x})+1},
\end{equation}
where $\delta_k(y_i')-\delta_k(y_i)$ is the subtraction in $\mathbb{Z}$.
Moreover, for each $1 \leqslant j \leqslant n$ and $l \in \mathbb{Z}_{\geqslant 0}$, we have 
\begin{equation*}
\mu_p^n \left(\ell_j ^{-1}(l)\right)  = \mu_p \left(\delta_{[0,l-1]}^{-1}(p-1, p-1, \cdots, p-1) \setminus \delta_{[0,l]}^{-1}(p-1, p-1, \cdots, p-1)\right) = p^{-l}-p^{-(l+1)},
\end{equation*}
where we put $\delta_{[0,-1]}^{-1}= \mathbb{Z}_p$.

Now for any fixed $\epsilon>0$ and any fixed integer $k \geqslant 1$, we let $L_{\epsilon}$ be the least non-negative integer such that $$  m \cdot p^{ n (1+L_{\epsilon})/m +2} \geqslant T_{m/n, k, \epsilon}.$$
Since $\varpi^{(k)}_{m/n, \epsilon}$ is increasing, for each integer $1 \leqslant j \leqslant n$, we have
\begin{align*}
    \int_{\mathbb{Z}_p^n} \varpi^{(k)}_{m/n, \epsilon} \left(|\kappa^{\Phi}(\mathbf{e}_j, \mathbf{x})| \right)\mathrm{d}\mu_p^n(\mathbf{x}) 
    & \leqslant \int_{X}  \varpi^{(k)}_{m/n, \epsilon} \left( m \cdot p^{\hat{\ell}_j(\mathbf{x})+1} \right) \mathrm{d}\mu_p^n(\mathbf{x}) \qquad  \text{ by } (\ref{Equation_upper_bound_of_cocycle_in_p_adic_adding_machines})\\
    & = \sum_{l=0}^{+\infty} \varpi^{(k)}_{m/n, \epsilon} \left( m \cdot p^{ \left\lfloor n (1+l)/m  \right \rfloor+2} \right)\cdot \mu_p^n \left(\ell_j ^{-1}(l)\right) \\
    & \leqslant L_{\epsilon} \cdot \varpi^{(k)}_{m/n, \epsilon} (0) + \sum_{l=L_{\epsilon}}^{+\infty} \varpi^{(k)}_{m/n, \epsilon} \left( m \cdot p^{ n (1+l)/m +2} \right)\cdot \mu_p^n \left(\ell_j ^{-1}(l)\right)\\
   & = L_{\epsilon} \cdot \varpi^{(k)}_{m/n, \epsilon} (0) + P \cdot \sum_{l=L_{\epsilon}}^{+\infty} \frac{  \left( \log^{(k)} \left( Q \cdot p^{nl/m} \right) \right)^{-\epsilon}}{\Lambda_k\left( Q \cdot p^{nl/m} \right)}
    \end{align*}
    where $P:=P_{n,m,p}:=(m p^2)^{m/n} \cdot (p-1)>0$ and $Q:=Q_{n,m,p}:=mp^{n/m+2}>0$.
    Note that $$ \lim_{l \to \infty} \frac{ \log^{(k)} \left( Q \cdot p^{nl/m} \right)}{ \log^{(k-1)}{(l)}} \quad  \text{and} \quad \lim_{l \to \infty} \frac{\Lambda_k\left( Q \cdot p^{nl/m} \right)}{l \cdot \Lambda_{k-1}(l)}$$
    both exist and are positive constants (depending only on $n,m,p$), with the convention $\log^{(0)}(l)=l$ and $\Lambda_{0}(l)=1$.
    Since it is easy to see (by the integral test) $$\sum_{l} \frac{\left( \log^{(k-1)}(l) \right)^{-\epsilon}}{l \cdot \Lambda_{k-1}(l) \ }< + \infty,$$
    the comparison test then gives
    $$\sum_{l=L_{\epsilon}}^{+\infty} \frac{  \left( \log^{(k)} \left( Q \cdot p^{nl/m} \right) \right)^{-\epsilon}}{\Lambda_k\left( Q \cdot p^{nl/m} \right)}< +\infty.$$
    So we see $$\int_{\mathbb{Z}_p^n} \varpi^{(k)}_{m/n, \epsilon} \left(|\kappa^{\Phi}(\mathbf{e}_j, \mathbf{x})| \right)\mathrm{d}\mu_p^n(\mathbf{x}) < + \infty,$$
    for any integer $1 \leqslant j \leqslant n$, any integer $k \geqslant 1$ and any real number $\epsilon>0$.

 In view of Remark \ref{Remark_checking_integrability_for_generating_set}, we see $\Phi$ is $\varpi^{(k)}_{m/n, \epsilon}$-integrable for every $\epsilon>0$ and every integer $k \geqslant 1$. 
 A similar argument shows that the orbit equivalence $\Phi^{-1}=B_n^{-1} \circ B_m$ is $\varpi^{(k)}_{n/m, \epsilon}$-integrable for every $\epsilon>0$ and every integer $k \geqslant 1$.
 It then follows by definition that $\mathbb{Z}^n \curvearrowright (\mathbb{Z}_p^n, \mu_p^n)$ is $\left(\varpi^{(k)}_{m/n, \epsilon}, \varpi^{(k)}_{n/m, \epsilon} \right)$-orbit equivalent to $\mathbb{Z}^m \curvearrowright (\mathbb{Z}_p^m, \mu_p^m)$ for all $\epsilon>0$.
\end{proof}

\bigskip

\noindent\textbf{Acknowledgments.} 
The author would like to warmly thank Jia-Yan Yao for interesting discussions on the subject. 
He would like also to heartily thank the National Natural Science Foundation of China (Grant No.\,12231013) for partial financial support.

\bigskip

\noindent\textbf{Use of AI.} 
The proof strategy of Theorem \ref{Theorem_main_result_measure_equivalence_version} was partially obtained through the interaction with ChatGPT (Plus).
All AI-generated suggestions were verified and refined by the author. 
The paper was written by the author.
ChatGPT was used for language polishing and error
checking after the paper was written up.
The author takes full responsibility for the correctness of the paper.

\bibliographystyle{amsalpha}
\bibliography{references}

\vskip 1 cm
\begin{tabular}{ll}
Chang-Hua JIAO & \\
Department of Mathematics &  \\
Tsinghua University &  \\
Beijing 100084 &  \\
People's Republic of China &  \\
E-mail: jch23@mails.tsinghua.edu.cn &
\end{tabular}

\end{document}